\newtheorem{thrm}{Theorem}[section]
\newtheorem{lemma}[thrm]{Lemma}
\newtheorem{prop}[thrm]{Proposition}
\newtheorem{cor}[thrm]{Corollary}
\newtheorem{dfn}[thrm]{Definition}
\newtheorem{rmrk}[thrm]{Remark}
\newcommand{\newsection}{    
\setcounter{equation}{0}\section}
\def\appendix#1{\addtocounter{section}{1}\setcounter{equation}{0}
\renewcommand{\thesection}{\Alph{section}}
\section*{Appendix \thesection\protect\indent \parbox[t]{11.15cm}{#1}}
\addcontentsline{toc}{section}{Appendix \thesection\ \ \ #1}}
\newcommand{\be}{\begin{eqnarray}}
\newcommand{\ee}{\end{eqnarray}}
\newcommand{\bea}{\begin{eqnarray}}
\newcommand{\eea}{\end{eqnarray}}
\newcommand{\ba}{\begin{array}}
\newcommand{\ea}{\end{array}}
\def\sb {{\nabla}}
\def\LC{{\nabla^g}}
\begin{document}
\begin{center}
\vspace*{-1.0cm}

\vspace{1.0cm} {\Large \bf The Riemannian Bianchi identities  of metric connections\\[2mm] with  skew  torsion and generalized Ricci solitons}
 \\[.2cm]

\vspace{0.5cm}
 {\large S.~ Ivanov${}^1$ and  N. Stanchev$^2$}

\vspace{0.5cm}

${}^1$ University of Sofia, Faculty of Mathematics and
Informatics,\\ blvd. James Bourchier 5, 1164, Sofia, Bulgaria
\\and  Institute of Mathematics and Informatics,
Bulgarian Academy of Sciences\\
email: ivanovsp@fmi.uni-sofia.bg

\vspace{0.5cm}
${}^2$ University of Sofia, Faculty of Mathematics and
Informatics,\\ blvd. James Bourchier 5, 1164, Sofia, Bulgaria\\

\vspace{0.5cm}

\end{center}

\begin{abstract}
Curvature properties of a metric connection with totally skew-symmetric torsion are investigated. It is shown that if either the 3-form $T$ is harmonic, $dT=\delta T=0$ or the curvature of the torsion connection $R\in S^2\Lambda^2$ then the scalar curvature of a $\sb$-Einstein manifold is determined by the norm of the torsion up to a constant. It is proved that a compact generalized gradient Ricci soliton with closed torsion is Ricci flat if and only if  either the norm of the torsion or the Riemannian scalar curvature are  constants. In this case, the   torsion 3-form  is harmonic and the gradient function has to be constant.

Necessary and sufficient conditions for  a metric  connection with skew torsion to satisfy the Riemannian first Bianchi identity as well as the contracted Riemannian second Binachi identity  are presented. It is shown that if the torsion connection satisfies the Riemannian first Bianchi identity then it  satisfies the contracted Riemannian second Bianchi identity. It is also proved that a metric connection with skew torsion satisfying the curvature identity $R(X,Y,Z,V)=R(Z,Y,X,V)$ must be flat. 

\medskip

Keywords: torsion connection, Riemannian Bianchi identities, generalized gradient Ricci solitons.

\medskip

AMS MSC2010: 53C55, 53C21, 53E20, 53Z05
\end{abstract}

\vskip 0.5cm
\noindent{\bf Acknowledgements:} \vskip 0.1cm
 We would like to thank Jeffrey Streets and Ilka Agricola for  extremely useful remarks, comments and suggestions.

The research of S.I.  is partially  financed by the European Union-Next Generation EU, through the National Recovery and Resilience Plan of the Republic of Bulgaria, project SUMMIT: BG-RRP-2.004-0008-C01. The research of N.S.  is partially supported  by Contract KP-06-H72-1/05.12.2023 with the National Science Fund of Bulgaria,  by Contract 80-10-192 / 17.5.2023   with the Sofia University "St.Kl.Ohridski" and  the National Science Fund of Bulgaria, National Scientific Program ``VIHREN", Project KP-06-DV-7.

\vskip 0.5cm

Conflict of interest: No conflict of interest.

\vskip 0.5cm

Data availability: applicable


\tableofcontents

\setcounter{section}{0}
\setcounter{subsection}{0}




\newsection{Introduction}

Riemannian manifolds with metric connections having totally skew-symmetric torsion and special holonomy received a strong interest mainly from supersymmetric string theories and supergravity.  The main interest is the existence and properties of such a connection with holonomy inside $U(n)$ which also  preserves the $U(n)$ structure i.e. metric connections with skew-symmetric torsion on an almost Hermitian manifold preserving the almost Hermitian structure.

Hermitian manifolds have widespread applications in both physics
and differential geometry. These are complex manifolds equipped
with a metric $g$, 
and a Kaehler form $\omega$ which  are of type  (1,1) with respect to the
complex structure $J$. There are many examples of Hermitian
manifolds as every complex manifold admits a Hermitian structure.
In many applications, Hermitian manifolds have  additional
properties which are expressed as either a condition  on $\omega$
or as a restriction on the holonomy of one of the Hermitian
connections.

Given a Hermitian manifold $(M,g,J)$  the  Strominger-Bismut connection $\nabla$ is the unique
connection on $M$  that is Hermitian ($\nabla g= 0$ and $\nabla J = 0$) and has
totally skew-symmetric torsion tensor $T$. Its existence and explicit expression first appeared in Strominger's seminal paper \cite{Str} in 1986 in connection with the heterotic supersymmetric string background, where he called it the H-connection. Three years later, Bismut formally discussed
and used this connection in his local index theorem paper \cite{bismut}, which leads to the name Bismut
connection in literature. Note that this connection was also called KT connection (K\"ahler with torsion) and characteristic connection. 

If the torsion 3-form $T$ of $\nabla$ is closed, $dT=0$, which is equivalent  to the condition $\partial\bar\partial\omega=0$, the Hermitian metric g is called SKT (strong K\"ahler with torsion) \cite{hkt} or pluriclosed. 
The SKT (pluriclosed) metrics have found many applications in
both physics, see e.g. \cite{hull, howe, Str, hethor,
hethor1,sethi} and geometry, see e.g. \cite{yaujost,IP1, IP2, salamon, swann, yau, FIUV,FT1, streets, liu}. For example in type II string theory, the torsion 3-form 
$T$ is identified with the 3-form field strength. This is required
by construction to satisfy $dT=0$. Streets and Tian
\cite{streets} introduced a Hermitian Ricci flow under which the
pluriclosed or equivalently strong KT structure is preserved. Generalizations of the pluriclosed condition $\partial\bar\partial\omega=0$ on 2n dimensional Hermitian manifolds in the form $
\omega^\ell\wedge
\partial\bar\partial \omega^k=0~, ~~~1\leq k+\ell\leq n-1~ $  have  been investigated in \cite{FU,Pop,FWW,IP3} etc.

Hermitian metrics with the Strominger-Bismut connection being K\"ahler-like, 
namely, its curvature satisfies the Riemannian first Bianchi identity ( the identity \eqref{RB} below), have been studied in \cite{AOUV},
investigating this property on 6-dimensional solvmanifolds with a  holomorphically
trivial canonical bundle. It was  conjectured by Angella-Otal-Ugarte-Villakampa in \cite{AOUV} that such  metrics should be SKT (pluriclosed).

Support to that can be derived from  [Theorem~5]\cite{WYZ} where the simply connected Hermitian manifolds with flat  Strominger-Bismut connection are classified and it easily follows that these spaces are  
SKT (pluriclosed) with parallel 3-form torsion \cite{AOUV}. The latter  conclusion also follows from the Cartan-Schouten theorem \cite{CS}, (see also [Theorem~2.2]\cite{AF}). 

The conjecture by  Angella-Otal-Ugarte-Villakampa was proved in \cite{ZZ}. In the first version of \cite{ZZ} in the arXive the authors stated,  by a misprint, that the  Strominger-Bismut connection is K\"ahler like if and only if the curvature satisfies the  identity 
 \begin{equation}\label{zz}R(X,Y,Z,V)=R(Z,Y,X,V).
 \end{equation}
It is shown in \cite{ZZ} that if the curvature of a Strominger-Bismut connection is K\"ahler like then one has $dT=\nabla T=0$. In fact, in the proof, they used the correct Riemannian first Bianchi identity  \eqref{RB}. 
The corrected definition of the K\"ahler-like condition was given in the second version of \cite{ZZ}.

 It was shown by Fino and Tardini in \cite{FTar} that the curvature condition \eqref{zz} is strictly stronger than the Riemannian first Bianchi identity by constructing an explicit example whose Strominger-Bismut curvature satisfies the Riemannian first Bianchi identity but it does not obey the condition \eqref{zz}.

In general, metric connections with skew-symmetric and closed torsion $T, dT=0$, are closely connected with the generalized Ricci flow. Namely, the fixed points of the generalized Ricci flow are Ricci flat metric connections with harmonic torsion 3-form, $Ric=dT=\delta T=0$, we refer to the recent book \cite{GFS} and the references given there for  mathematical and physical motivation. 

The first goal in the paper is to investigate properties of a metric connection with skew-symmetric torsion with applications to generalized Ricci flow.  We  consider $\sb$-Einstein spaces determined  with the condition  that the symmetric part of the Ricci tensor of the torsion connection  is a scalar multiple of the metric. These spaces are introduced by Agricola and Ferreira  \cite[Definition~2.2]{AFer} as the critical points of the $\sb$-Einstein-Hilbert functional   
 \cite[Theorem~2.1]{AFer}. In the case when the torsion 3-form is $\sb$-parallel the  $\sb$-Einstein spaces are investigated in \cite{AFer,AFF,Ch1,Ch2} and a large number of examples are given there. The  $\sb$-Einstein spaces appear also in generalized geometry. In  \cite[Proposition~3.47]{GFS} it is shown  that a Riemannian metric $g$ and a harmonic 3-form $T$ are  a critical point of the generalized Einstein-Hilbert functional if and only if it is $\sb$-Einstein. 

Notice that in contrast to the Riemannian case, for a $\sb$-Einstein manifold the scalar curvature $Scal$ of the torsion connection is not necessarily constant (for details see \cite{AFer}). If the torsion is $\sb$-parallel  then the scalar curvature of the torsion connection and the Riemannian scalar curvature are constants, similarly to an Einstein manifold  \cite[Proposition~2.7]{AFer}.

We investigate the constancy of the scalar curvatures of a $\sb$-Einstein space. We show in Theorem~\ref{mainE} that  if either the 3-form $T$  has zero torsion 1-forms, $\delta T\lrcorner T=T\lrcorner dT=0$, (in particular if $T$ is harmonic, $dT=\delta T=0$) or the curvature of the torsion connection $R\in S^2\Lambda^2$ then the scalar curvature of a $\sb$-Einstein manifold is determined by the norm of the torsion up to a constant. In particular, the scalar curvature of the torsion connection is constant if and only if  the norm of the torsion is constant.

Observing that if the torsion is parallel, $\sb T=0$, then the torsion 1-forms vanish, $\theta=\delta T\lrcorner T=0, \Theta=T\lrcorner dT=0$, we obtain that any $\sb$-Einstein manifold with parallel torsion of dimension bigger than 2 the Riemannian scalar curvature and the scalar curvature of the torsion connections are constants, thus confirm the result of Agricola and Ferreira, \cite[Proposition~2.7]{AFer}.

A special phenomenon occurs in dimension six, namely \eqref{ein9} below  imply
that if $(M,g,T)$  is a six-dimensional Riemannian manifold with zero torsion 1-forms (in particular, if $T$ is a harmonic 3-form)  and the  metric connection with  torsion $T$ is $\sb$-Einstein then the Riemannian scalar curvature is constant, Corollary~\ref{six}.

We recall \cite[Definition~4.31]{GFS} that a Riemannian manifold $(M,g,T)$ with a closed 3-form $T$ is a generalized  steady Ricci soliton with $k=0$ if one has 
\begin{equation*}
Ric^g=\frac14T^2-\frac12\mathbb{L}_Xg, \qquad \delta T=-X\lrcorner T, \qquad dT=0.
\end{equation*}
If the vector field $X$ is a gradient of a smooth function $f$ then we have the notion of a generalized gradient Ricci soliton. A complete generalized gradient Ricci solitons are constructed on complex surfaces  in  \cite{SU}. The existence and classification of non-trivial solitons on compact (complex) 4-manifolds has been recently proved in \cite{streets1,StrUs,ASU}.

Our first main observation is the following 
\begin{thrm}\label{mainR}
Let $(M,g,T)$ be a compact Riemannian manifold with closed 3-form $T, dT=0$. 

If $(M,g,T,f)$ is a generalized gradient Ricci soliton then 
the following conditions are equivalent:
\begin{itemize}
\item[a).] The norm of the torsion is constant, $d||T||^2=0$;
\item[b).] The function $f$ is constant.
\item[c).] The torsion connection is Ricci flat, $Ric=0$;
\item[d).] The Riemannian scalar curvature is constant, $Scal^g=const$.
\end{itemize}
In all four cases, the 3-form $T$ is harmonic.
\end{thrm}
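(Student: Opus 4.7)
The plan is to distill the three soliton equations into a pair of pointwise scalar identities relating $Scal^g$, $\|T\|^2$, $\Delta f$ and $|\nabla f|^2$, and then close every equivalence by a maximum-principle argument applied to an exponential of $f$. (Throughout this sketch $\nabla$ denotes the Levi--Civita connection $\nabla^g$, so Hessians, divergences and Laplacians are all Riemannian.) The equivalences $(b)\Leftrightarrow(c)$ and $(b)\Rightarrow(a),(d)$ will follow directly from the identities; the genuine content of the theorem is $(a)\Rightarrow(b)$ and $(d)\Rightarrow(b)$.

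Taking the trace of the symmetric tensor soliton equation (equivalent to $Ric^g+\nabla df=\tfrac14 T^2$) produces the trace identity $Scal^g+\Delta f=\tfrac32\|T\|^2$, labelled \textup{(I)}. For the second identity I apply the divergence $\nabla^{j}$ to the soliton equation and reduce each contracted piece using: the contracted Riemannian second Bianchi identity $\nabla^{j}Ric^g_{ji}=\tfrac12\nabla_i Scal^g$; the Bochner commutation $\nabla^{j}\nabla_j\nabla_i f=\nabla_i\Delta f+Ric^g_{ij}\nabla^{j}f$; the soliton constraint $\delta T=-\nabla f\lrcorner T$, which rewrites $(\nabla^{j}T_{jkl})T_i{}^{kl}$ as a term in $(T^2)_{im}\nabla^{m}f$; and the algebraic identity $T^{jkl}\nabla_j T_{ikl}=\nabla_i\|T\|^2$, a direct consequence of $dT=0$ (via the first Bianchi expansion $(dT)_{jikl}=0$ together with the antisymmetry of $T$). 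The off-diagonal $(T^2)_{im}\nabla^{m}f$ contributions cancel exactly thanks to the soliton constraint, and together with \textup{(I)} this yields the first-integral identity $\|T\|^2+\Delta f-|\nabla f|^2=c$, labelled \textup{(II)}, for some $c\in\bR$.

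With \textup{(I)} and \textup{(II)} in hand, the short equivalences are essentially automatic. $(b)\Rightarrow(c)$: if $f$ is constant then $\nabla df=0$, so $Ric^{sym}=Ric^g-\tfrac14 T^2=0$; simultaneously $\delta T=-\nabla f\lrcorner T=0$ forces $Ric^{ant}=-\tfrac12\delta T=0$, hence $Ric=0$. $(c)\Rightarrow(b)$: $Ric^{sym}=0$ together with the soliton equation forces $\nabla df=0$, whose trace is $\Delta f=0$; on compact $M$ this gives $f$ constant. $(b)\Rightarrow(a)$: substituting $\nabla f=0$ in \textup{(II)} yields $\|T\|^2$ constant. $(b)\Rightarrow(d)$: substituting $\Delta f=0$ and $\|T\|^2$ constant in \textup{(I)} yields $Scal^g$ constant.

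For the two nontrivial implications, I use a maximum-principle trick. If \textup{(a)} holds, \textup{(II)} reduces to $\Delta f-|\nabla f|^2=c'$ for some real $c'$, and a direct computation gives $\Delta e^{-f}=-e^{-f}(\Delta f-|\nabla f|^2)=-c'\,e^{-f}$; integration over the compact $M$ yields $0=-c'\int_M e^{-f}$, so $c'=0$ and $e^{-f}$ is harmonic, hence constant, whence $f$ is constant. For $(d)\Rightarrow(b)$, eliminating $\|T\|^2$ between \textup{(I)} and \textup{(II)} produces $\Delta f=\tfrac35|\nabla f|^2+c''$; the analogous argument applied to $e^{-3f/5}$ gives $c''=0$ and again $f$ constant. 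Finally, in each of the four equivalent situations $\nabla f=0$, hence $\delta T=-\nabla f\lrcorner T=0$, and together with $dT=0$ this says $T$ is harmonic. The delicate step in the plan is the clean derivation of \textup{(II)}: it depends precisely on how $\delta T=-\nabla f\lrcorner T$ and $dT=0$ conspire to convert the divergence of the soliton equation into an exact gradient.
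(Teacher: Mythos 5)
Your proposal is correct, and I verified that the key cancellation you rely on in deriving (II) does occur: taking the divergence of $Ric^g+\nabla^g df=\tfrac14T^2$, the term $\tfrac14 T^2_{is}\nabla_s f$ coming from $\nabla^{g,j}T_{jab}=-\delta T_{ab}=\nabla_sf\,T_{sab}$ cancels against the same term hidden in $Ric^g_{is}\nabla^s f$, and $T^{jkl}\nabla^g_jT_{ikl}=\tfrac16\nabla^g_i(T_{abc}T_{abc})$ from $dT=0$ closes the identity; your (I) and (II) are exactly the paper's \eqref{gein1}-trace and \eqref{gein7} (i.e.\ [GFS, Prop.~4.33]) written in the normalized convention $\|T\|^2=\tfrac16T_{abc}T_{abc}$ and with the analyst's Laplacian. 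The two arguments diverge in two places. First, you obtain the first integral purely Riemannian-ly from the divergence of the soliton equation, whereas the paper routes it through the contracted second Bianchi identity of the torsion connection $\nabla$ (its Proposition~3.5/equation \eqref{rt}), after recasting the soliton equations as $Ric=-\nabla\nabla f$, $Scal=\Delta f$; the paper's route reuses machinery it needs elsewhere, yours is self-contained. Second, and more substantively, your closing step is different and cleaner: the paper differentiates \eqref{gein7} once more, discards a $-2\|Ric\|^2$ term to get a differential inequality, and invokes the strong maximum principle to force $\Delta f$ (resp.\ $Scal^g$) constant; you instead observe that under (a) or (d) the first integral becomes $\Delta f-\lambda^{-1}|\nabla f|^2=\mathrm{const}$ for a suitable $\lambda>0$, so that $e^{-f/\lambda}$ satisfies an exact eigenvalue-type equation whose integral over compact $M$ kills the constant and leaves a harmonic, hence constant, function. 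This avoids the second differentiation and the strong maximum principle entirely, at the cost of nothing; the remaining equivalences $(b)\Leftrightarrow(c)$, $(b)\Rightarrow(a),(d)$ and the harmonicity of $T$ are handled the same way in both arguments.
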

\begin{rmrk}
Theorem~\ref{mainR} shows, in particular, that there are no homogeneous compact generalized gradient Ricci solitons with a non-trivial gradient vector field.
\end{rmrk}

Examples of Ricci flat torsion connections are constructed in \cite{PR,LW}.

The second  aim of this  note is to express necessary and sufficient conditions the curvature of a metric connection with totally skew-symmetric torsion to satisfy the Riemannian first Bianchi identity as well as the contracted Riemannian second Binachi identity (\eqref{RB2} below)  and equation \eqref{zz}. 

Our second main result is the next
 \begin{thrm}\label{th1} 
 The curvature of a   metric connection $\nabla$ with skew-symmetric torsion $T$ on a Riemannian manifold $(M,g)$ satisfies the Riemannian first Bianchi identity if and only if  the next identities hold
 \begin{equation}\label{bsk}
 dT=-2\nabla T=\frac23\sigma^T,
 \end{equation}
 where  the four form $\sigma ^T$ corresponding to the 3-form $T$ is defined below  with \eqref{sigma}.
 
In this case,  the norm of the 3-form $T$ is a constant, $||T||^2=const.$ and the curvature of the connection $\sb$ satisfies the contracted Riemannian second Bianchi identity.
 \end{thrm}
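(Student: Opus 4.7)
The plan hinges on two elementary identities for a metric connection $\nabla$ with skew torsion $T$. The first is the first Bianchi identity for $\nabla$, which takes the form
\[
\mathfrak{S}_{X,Y,Z} R(X,Y,Z,V) \;=\; \mathfrak{S}_{X,Y,Z}(\nabla_X T)(Y,Z,V) \;+\; \sigma^T(X,Y,Z,V),
\]
where $\mathfrak{S}$ denotes cyclic sum; the appearance of $\sigma^T$ on the right comes from a short algebraic verification of $\mathfrak{S}_{X,Y,Z} g(T(T(X,Y),Z),V) = \sigma^T(X,Y,Z,V)$, via $g(T(u,v),w)=T(u,v,w)$ and the cyclic symmetry of a 3-form. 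The second identity expresses the exterior derivative of $T$ through $\nabla$,
\[
dT(X,Y,Z,V) \;=\; \sum_{i=0}^{3}(-1)^i (\nabla_{X_i}T)(X_0,\ldots,\widehat{X_i},\ldots,X_3) \;+\; 2\sigma^T(X,Y,Z,V),
\]
obtained by applying the general formula for $d\omega$ in terms of a connection with torsion to the 3-form $T$.

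Assuming the Riemannian first Bianchi identity, the first identity gives $\mathfrak{S}_{X,Y,Z}(\nabla_X T)(Y,Z,V) = -\sigma^T(X,Y,Z,V)$. Substituting into the $dT$-identity, and separating the slot $V$ via $\sum_i (-1)^i (\nabla_{X_i}T)(\widehat{X_i}) = \mathfrak{S}_{X,Y,Z}(\nabla_X T)(Y,Z,V) - (\nabla_V T)(X,Y,Z)$, produces
\[
(\nabla_V T)(X,Y,Z) \;=\; \sigma^T(X,Y,Z,V) \;-\; dT(X,Y,Z,V).
\]
The right-hand side is a 4-form in $(X,Y,Z,V)$, which forces $\nabla T$ to be totally antisymmetric. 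Inserting a 4-form $\nabla T$ back into the $dT$-identity collapses the alternating sum to $4\nabla T$, so $dT = 4\nabla T + 2\sigma^T$, while the displayed relation reads $-\nabla T = \sigma^T - dT$. This pair of linear equations has the unique solution $\nabla T = -\tfrac{1}{3}\sigma^T$, $dT = \tfrac{2}{3}\sigma^T$, yielding both $dT = -2\nabla T$ and $dT = \tfrac{2}{3}\sigma^T$. Conversely, if $\nabla T$ is a 4-form equal to $-\tfrac{1}{3}\sigma^T$, then $\mathfrak{S}_{X,Y,Z}(\nabla_X T)(Y,Z,V) = 3(\nabla T)(X,Y,Z,V) = -\sigma^T$, and the torsion first Bianchi identity gives $\mathfrak{S}_{X,Y,Z} R = 0$.

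For $\|T\|^2 = \text{const}$ I would compute $X\|T\|^2 = 2\langle\nabla_X T, T\rangle$ with $\nabla_X T = -\tfrac{1}{3}\iota_X\sigma^T$. In an orthonormal frame the pairing becomes a sum of contractions of the form $\sum_{a,b,c,e} T(X,e_a,e_e)\,T(e_b,e_c,e_e)\,T(e_a,e_b,e_c)$; the inner contraction $\sum_{b,c}T(e_a,e_b,e_c)T(e_b,e_c,e_e)$ is symmetric in $(a,e)$, while $T(X,e_a,e_e)$ is antisymmetric in $(a,e)$, so the whole sum vanishes. For the contracted Riemannian second Bianchi I would start from the full second Bianchi identity for $\nabla$ (which always holds, with torsion correction terms), contract a suitable pair of arguments, and invoke the pair symmetry $R(X,Y,Z,V)=R(Z,V,X,Y)$ that now follows from the established first Bianchi together with the standard skew-symmetries of $R$; the identities $\nabla T = -\tfrac{1}{3}\sigma^T$ and $\|T\|^2 = \text{const}$ then absorb the remaining torsion corrections. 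The main obstacle will be a careful bookkeeping of the cross terms $R(T(\cdot,\cdot),\cdot,\cdot,\cdot)$ and the derivatives of $T$ that appear in this contraction, showing they combine exactly to yield the contracted Riemannian second Bianchi \eqref{RB2} with no residual.
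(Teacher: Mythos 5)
Your proof of the equivalence with \eqref{bsk} is correct and is a genuine (if mild) variant of the paper's argument. The paper gets the total antisymmetry of $\nabla T$ by first passing to the pair symmetry $R(X,Y,Z,V)=R(Z,V,X,Y)$ and quoting \cite[Lemma~3.4]{I} (the equivalence \eqref{fourf}); you instead derive $(\nabla_VT)(X,Y,Z)=\sigma^T(X,Y,Z,V)-dT(X,Y,Z,V)$ directly from the two basic identities and read off the $4$-form property from the right-hand side, which is more self-contained. The resulting linear system $dT=4\nabla T+2\sigma^T$, $\nabla T=dT-\sigma^T$ and its solution $\nabla T=-\tfrac13\sigma^T$, $dT=\tfrac23\sigma^T$ match \eqref{bsk}, and your converse is the same substitution the paper performs in \eqref{1bi}. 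Your proof that $\|T\|^2$ is constant is also correct: the vanishing of $\langle\iota_X\sigma^T,T\rangle$ by the symmetric-against-antisymmetric contraction is exactly the algebraic identity \eqref{sigt} of the paper; the paper reaches the same conclusion slightly differently, by observing that $\nabla T=-\tfrac13\sigma^T$ together with \eqref{h-gh} gives $\nabla^{1/3}T=0$ for the metric connection with torsion $\tfrac13T$.

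The one place where your proposal falls short of a proof is the contracted Riemannian second Bianchi identity: you describe the right strategy but explicitly defer the computation, and that computation is the entire content of this part of the theorem. Concretely, the double trace of the second Bianchi identity \eqref{secB}, after the cyclic curvature sums are converted via \eqref{1bi1}, produces (Proposition~\ref{2bif})
\begin{equation*}
d(Scal)(X)-2\sum_{i=1}^n(\nabla_{e_i}Ric)(X,e_i)+\tfrac16\,d\|T\|^2(X)+\theta(X)+\tfrac16\,\Theta(X)=0,
\qquad \theta_j=\delta T_{ab}T_{jab},\ \ \Theta_j=T_{abc}\,dT_{jabc},
\end{equation*}
so the ``residual'' you promise to absorb is exactly $\tfrac16 d\|T\|^2+\theta+\tfrac16\Theta$. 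Under \eqref{bsk} each term does vanish: $\nabla T$ a $4$-form forces $\delta T=0$ (hence $\theta=0$), $\Theta_j=\tfrac23T_{abc}\sigma^T_{jabc}=0$ again by \eqref{sigt}, and $d\|T\|^2=0$ by the previous step. So your plan succeeds, but as written it asserts rather than establishes the key cancellation; to complete the argument you must either carry out the trace computation leading to the displayed identity or cite it. Note also that the curvature cross terms $R(T(\cdot,\cdot),\cdot,\cdot,\cdot)$ in the contraction are not handled by the pair symmetry alone — one needs \eqref{1bi1} to turn the cyclic sum $\sum_{i,j,k}T_{ijk}\bigl[R(X,e_i,e_j,e_k)+R(X,e_j,e_k,e_i)+R(X,e_k,e_i,e_j)\bigr]$ into $\tfrac12\nabla_X\|T\|^2+\tfrac12\langle T, \iota_X dT\rangle$, which is where $\Theta$ enters.
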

Clearly, any torsion-free connection satisfying \eqref{zz} must be flat which is  a simple consequence of the first Bianchi identity. In particular, a Riemannian manifold satisfies \eqref{zz} if and only if it is flat. 

We show that this is valid  also for metric connections with skew-symmetric torsion which  explains the reason for the existence of  the example in \cite{FTar}. We derive from Theorem~\ref{th1}  the next general result
 \begin{thrm}\label{thzz}
 A metric connection with skew-symmetric torsion satisfies the  condition \eqref{zz} if and only if  it is flat, $R=0$.
 \end{thrm}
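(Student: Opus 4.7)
My plan is to reduce the statement to an elementary algebraic manipulation of curvature symmetries, bypassing any detailed structural analysis. The only universal symmetry I will use is $R(X,Y,Z,V) = -R(Y,X,Z,V)$, which holds for the curvature of every linear connection (it expresses the antisymmetry of $R(X,Y)$ in its two vector arguments). The metric property of $\sb$ and the skew-symmetry of $T$ will not actually be needed for the implication \eqref{zz} $\Rightarrow R=0$, though they enter the converse trivially.

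First, I would substitute the hypothesis \eqref{zz} into this universal antisymmetry to obtain
\[
R(X,Y,Z,V) \;=\; R(Z,Y,X,V) \;=\; -R(Y,Z,X,V),
\]
so that $R$ is antisymmetric under a cyclic shift of its first three arguments (with $V$ held fixed). Applying this cyclic antisymmetry three times in succession yields the chain
\[
R(X,Y,Z,V) \;=\; -R(Y,Z,X,V) \;=\; R(Z,X,Y,V) \;=\; -R(X,Y,Z,V),
\]
which forces $2R(X,Y,Z,V)=0$, i.e.\ $R\equiv 0$. The converse is immediate since $R=0$ trivially satisfies \eqref{zz}.

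I would also record the connection with Theorem~\ref{th1}: summing the three cyclic permutations of the first displayed chain shows that under \eqref{zz} the Riemannian first Bianchi sum is equal to $-R(X,Y,Z,V)$. Thus \eqref{zz} is strictly stronger than the first Bianchi identity, forcing both the Bianchi sum and the curvature itself to vanish simultaneously. This is the precise reason why the Fino--Tardini example in \cite{FTar} can satisfy the first Bianchi identity while violating \eqref{zz}: the torsion identities $dT=-2\sb T=\frac23\sigma^T$ provided by Theorem~\ref{th1} are compatible with a nonzero $R$, whereas \eqref{zz} is not.

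There is essentially no obstacle in this argument; the only nontrivial point is to recognise that the interaction between \eqref{zz} and the basic antisymmetry in the first two slots already converts the hypothesis into a three-index cyclic antisymmetry, from which flatness is immediate after applying the cyclic shift three times. No analysis of the torsion forms $\theta$, $\Theta$, $\sigma^T$, nor any Bianchi-type identity for $\sb$, is required at any stage.
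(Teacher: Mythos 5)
Your proof is correct, and it is genuinely more elementary and more general than the one in the paper. You only use the universal antisymmetry $R(X,Y,Z,V)=-R(Y,X,Z,V)$ from \eqref{r1}: combined with \eqref{zz} it gives $R(X,Y,Z,V)=-R(Y,Z,X,V)$, and iterating this cyclic anti-invariance three times forces $R=-R$, hence $R=0$. The paper instead takes a long detour through the torsion structure: it substitutes \eqref{zz} into the identity \eqref{gen} to get $3dT=2\sigma^T$, invokes \cite[Lemma~3.4]{I} to conclude that $\sb T$ is a $4$-form, derives \eqref{bsk}, appeals to Theorem~\ref{th111} to obtain the Riemannian first Bianchi identity, and only then performs essentially your final algebraic step. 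What your route buys is brevity and scope — the result holds for the curvature of an arbitrary linear connection on a Riemannian manifold, with no metricity or skew-torsion hypothesis needed — while the paper's route, though logically redundant for this theorem, produces the intermediate torsion identities that tie the statement into the framework of Theorem~\ref{th1} and the Fino--Tardini example. One small correction to your side remark: under \eqref{zz} the first Bianchi sum equals $+R(X,Y,Z,V)$, not $-R(X,Y,Z,V)$, since $R(Y,Z,X,V)=-R(X,Y,Z,V)$ and $R(Z,X,Y,V)=+R(X,Y,Z,V)$; this is consistent with the paper's own final computation, where the sum collapses to $R(Z,X,Y,V)$, and of course it does not affect your conclusion.
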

 
 \begin{rmrk}
We note that  Theorem~\ref{mainE} generalizes  the Ricci flat case established  recently in \cite[Lemma~2.21]{Lee} where it was proved  that if the torsion is harmonic (it is sufficient only to be closed) and the Ricci tensor of the torsion connection vanishes then the norm of the torsion is constant. 

The converse is not true in general. Namely not any space with closed torsion of constant norm $dT=d||T||^2=0$  is Ricci flat. 

In some cases the converse holds true. For example, in the case of compact generalized gradient Ricci soliton the Ricci flatness is equivalent to the constancy of the norm of the torsion due to Theorem~\ref{mainR}. 

Another cases occur if the torsion connection has special holonomy, contained  in the groups $SU(3)$, $G_2$  or $Spin(7)$.  It is shown very recently in \cite {IS1}, \cite{IS2}  and \cite{IP} that  the $SU(3), G_2, Spin(7)$-torsion connection with closed torsion is Ricci flat on a compact manifold if and only if the norm of the torsion is constant.
 \end{rmrk}

Everywhere in the paper, we will make no difference between tensors and the corresponding forms via the metric as well as we will  use Einstein summation conventions, i.e. repeated Latin  indices are summed over up to $n$.

\section{Metric connection with skew-symmetric  torsion and its curvature}
On a Riemannian manifold $(M,g)$ of dimension $n$ any metric connection $\sb$ with totally skew-symmetric torsion $T$ is connected with the Levi-Civita connection $\sb^g$ of the metric $g$ by
\begin{equation}\label{tsym}
\sb^g=\sb- \frac12T.
\end{equation}
The exterior derivative $dT$ has the following  expression (see e.g. \cite{I,IP2,FI})
\begin{equation}\label{dh}
\begin{split}
dT(X,Y,Z,V)=(\nabla_XT)(Y,Z,V)+(\nabla_YT)(Z,X,V)+(\nabla_ZT)(X,Y,V)\\+2\sigma^T(X,Y,Z,V)-(\nabla_VT)(X,Y,Z),
 \end{split}
 \end{equation}
where the 4-form $\sigma^T$, introduced in \cite{FI}, is defined by
 \begin{equation}\label{sigma}
 \sigma ^T(X,Y,Z,V)=\frac12\sum_{j=1}^n(e_j\lrcorner T)\wedge(e_j\lrcorner T)(X,Y,Z,V) ,
\end{equation} 
$(e_j\lrcorner T)(X,Y)=T(e_j,X,Y)$ is the interior multiplication and $\{e_1,\dots,e_n\}$ is an orthonormal  basis.

The properties of the 4-form $\sigma^T$ are studied in detail in \cite{AFF} where it is shown that $\sigma^T$ measures the `degeneracy' of the 3-form $T$.

 For the curvature of $\sb$ we use the convention $ R(X,Y)Z=[\nabla_X,\nabla_Y]Z -
 \nabla_{[X,Y]}Z$ and $ R(X,Y,Z,V)=g(R(X,Y)Z,V)$. It has the well-known properties
 \begin{equation}\label{r1}
 R(X,Y,Z,V)=-R(Y,X,Z,V)=-R(X,Y,V,Z).
 \end{equation}
 The   Ricci tensors and scalar curvatures of the Levi-Civita connection $\LC$ and the torsion connection $\sb$ are related by \cite[Section~2]{FI}, (see also \cite [Prop. 3.18]{GFS})
\begin{equation}\label{rics}
\begin{split}
Ric^g(X,Y)=Ric(X,Y)+\frac12 (\delta T)(X,Y)+\frac14\sum_{i=1}^ng(T(X,e_i),T(Y,e_i);\\
Scal^g=Scal+\frac14||T||^2,\qquad Ric(X,Y)-Ric(Y,X)=-(\delta T)(X,Y),
\end{split}
\end{equation}
where $\delta=(-1)^{np+n+1}*d*$ is the co-differential acting on $p$-forms and $*$ is the Hodge star operator.

Following \cite{GFS} we denote 
$$T^2_{ij}=T_{iab}T_{jab}:=\sum_{a,b=1}^nT_{iab}T_{jab}.$$
 Then the first equality in \eqref{rics} takes the form
\[Ric^g_{ij}=Ric_{ij}+\frac12\delta T_{ij}+\frac14T^2_{ij}.\]
 The first Bianchi identity for $\nabla$ 
 \begin{equation*}
 \begin{split}
 R(X,Y,Z,V)+ R(Y,Z,X,V)+ R(Z,X,Y,V)\\=(\nabla_XT)(Y,Z,V)+(\nabla_YT)(Z,X,V)+(\nabla_ZT)(X,Y,V)+\sigma^T(X,Y,Z,V)
 \end{split}
 \end{equation*}
   can be written in the following form (see e.g. \cite{I,IP2,FI})
 \begin{equation}\label{1bi}
 \begin{split}
 R(X,Y,Z,V)+ R(Y,Z,X,V)+ R(Z,X,Y,V)\\
 =dT(X,Y,Z,V)-\sigma^T(X,Y,Z,V)+(\nabla_VT)(X,Y,Z).
 \end{split}
 \end{equation}
It is proved in \cite[p.307]{FI} that the curvature of  a metric connection $\sb$ with totally skew-symmetric torsion $T$  satisfies also the next identity
 \begin{equation}\label{gen}
 \begin{split}
 R(X,Y,Z,V)+ R(Y,Z,X,V)+ R(Z,X,Y,V)-R(V,X,Y,Z)-R(V,Y,Z,X)-R(V,Z,X,Y)\\
 =\frac32dT(X,Y,Z,V)-\sigma^T(X,Y,Z,V).
 \end{split}
 \end{equation}
 We obtain from \eqref{gen} and \eqref{1bi} 
 \begin{prop}
 The curvature of a metric connection with skew-symmetric torsion satisfies
 \begin{equation}\label{1bi1}
 \begin{split}
R(V,X,Y,Z)+R(V,Y,Z,X)+R(V,Z,X,Y)= -\frac12dT(X,Y,Z,V)+(\nabla_VT)(X,Y,Z).
 \end{split}
 \end{equation}
 \end{prop}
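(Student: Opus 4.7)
The plan is to derive \eqref{1bi1} as a direct algebraic consequence of the two identities \eqref{1bi} and \eqref{gen} already stated in the text. Notice that in \eqref{gen} the right hand side differs from the right hand side of \eqref{1bi} by a clean combination, and crucially the left hand side of \eqref{gen} is precisely the cyclic sum appearing in \eqref{1bi} minus the cyclic sum that we want to compute. So the strategy is simply to subtract one identity from the other and solve for the desired quantity.

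Concretely, write \eqref{1bi} and \eqref{gen} schematically as
\begin{equation*}
A \;=\; dT - \sigma^T + (\nabla_V T)(X,Y,Z),
\end{equation*}
\begin{equation*}
A - B \;=\; \tfrac{3}{2}\, dT - \sigma^T,
\end{equation*}
where
\begin{equation*}
A \;:=\; R(X,Y,Z,V)+R(Y,Z,X,V)+R(Z,X,Y,V),
\end{equation*}
\begin{equation*}
B \;:=\; R(V,X,Y,Z)+R(V,Y,Z,X)+R(V,Z,X,Y),
\end{equation*}
and $dT$, $\sigma^T$ are evaluated on $(X,Y,Z,V)$. Subtracting the second from the first yields
\begin{equation*}
B \;=\; A - (A-B) \;=\; \bigl(dT - \sigma^T + (\nabla_V T)(X,Y,Z)\bigr) - \bigl(\tfrac{3}{2}\,dT - \sigma^T\bigr) \;=\; -\tfrac{1}{2}\,dT(X,Y,Z,V) + (\nabla_V T)(X,Y,Z),
\end{equation*}
which is exactly \eqref{1bi1}. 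The $\sigma^T$ contributions cancel, and the coefficient $-\tfrac12$ in front of $dT$ arises from $1 - \tfrac{3}{2}$.

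There is essentially no obstacle: the statement is a purely formal rearrangement of two identities proved earlier (the first Bianchi identity rewritten using $dT$, and the symmetry defect identity from \cite{FI}). No further curvature computation or choice of frame is required, and the proposition can be stated in one or two lines once \eqref{1bi} and \eqref{gen} are quoted.
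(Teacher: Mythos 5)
Your proof is correct and is exactly the paper's route: the authors state that the proposition is obtained from \eqref{gen} and \eqref{1bi}, which is precisely the subtraction you carry out, with the $\sigma^T$ terms cancelling and the coefficient $-\tfrac12$ coming from $1-\tfrac32$.
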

  Following \cite{AOUV} we have
\begin{dfn} We say that the curvature $R$ satisfies the Riemannian first Bianchi identity if 
\begin{equation}\label{RB}
R(X,Y,Z,V)+R(Y,Z,X,V)+R(Z,X,Y,V)=0.
\end{equation}
\end{dfn}
 It is well known algebraic fact that \eqref{r1} and \eqref{RB} imply $R\in S^2\Lambda^2$ (c.f. \cite[Chapter~5]{KN1}), i.e it holds
 \begin{equation}\label{r4}
 R(X,Y,Z,V)=R(Z,V,X,Y).
 \end{equation}
 Note that, in general, \eqref{r1} and \eqref{r4} do not imply \eqref{RB}.
 
The precise condition the curvature of 
a metric connection $\sb$ with totally skew-symmetric torsion $T$ to satify \eqref{r4}  is given in \cite[Lemma~3.4]{I}, namely   the covariant derivative of the torsion with respect to the torsion connection $\sb T$ must be  a four form,
\begin{lemma} \cite[Lemma~3.4]{I}) The next equivalences hold for a metric connection with torsion 3-form
\begin{equation}\label{fourf}
(\sb_XT)(Y,Z,V)=-(\sb_YT)(X,Z,V) \Longleftrightarrow R(X,Y,Z,V)=R(Z,V,X,Y) ) \Longleftrightarrow dT=4\LC T.
\end{equation}
\end{lemma}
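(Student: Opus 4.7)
The plan is to establish the equivalences $(1)\Leftrightarrow (3)$ and $(1)\Leftrightarrow (2)$, where I label the three conditions in the order they appear in the statement.

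For $(1)\Leftrightarrow (3)$: using the connection relation $\LC = \sb - \tfrac12 T$ from \eqref{tsym}, a direct expansion of $(\LC_X T)(Y,Z,V)$ produces three correction terms of the form $\tfrac12 T(T(X,\cdot),\cdot,\cdot)$ which, after the cyclic identity $T(W,A,B)=T(A,B,W)$ valid for a 3-form, assemble exactly into $\tfrac12 \sigma^T(X,Y,Z,V)$ by the definition \eqref{sigma}. Hence
\[
(\LC_X T)(Y,Z,V) = (\sb_X T)(Y,Z,V) + \tfrac12 \sigma^T(X,Y,Z,V).
\]
Since $\sigma^T$ is already a 4-form, $\sb T$ is totally skew in all four arguments if and only if $\LC T$ is; and the latter condition is equivalent to $dT = 4 \LC T$ by the standard Cartan formula for the exterior derivative of a 3-form under a torsion-free connection, which collapses to $4(\LC_X T)(Y,Z,V)$ precisely when all four of its terms agree.

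For $(2)\Rightarrow (1)$: under the pair symmetry combined with \eqref{r1}, one has $R(V,X,Y,Z) = R(Y,Z,V,X) = -R(Y,Z,X,V)$ and cyclic variants, so the second triple of $R$-terms on the LHS of \eqref{gen} equals the negative of the first. Consequently \eqref{gen} reduces to $2\sum_{\mathrm{cyc}} R(X,Y,Z,V) = \tfrac32 dT - \sigma^T$, and subtracting \eqref{1bi} immediately gives
\[
(\sb_V T)(X,Y,Z) = -\tfrac14 dT(X,Y,Z,V) + \tfrac12 \sigma^T(X,Y,Z,V),
\]
whose right-hand side is a 4-form in $(X,Y,Z,V)$, and so $(V,X,Y,Z)\mapsto (\sb_V T)(X,Y,Z)$ is totally antisymmetric, giving condition $(1)$.

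For $(1)\Rightarrow (2)$: under $(1)$, equation \eqref{dh} collapses because its three covariant-derivative terms all agree with $(\sb_X T)(Y,Z,V)$ and $-(\sb_V T)(X,Y,Z)$ does too by total antisymmetry, forcing $dT = 4\sb T + 2\sigma^T$ and therefore the same formula for $\sb T$ as above. Substituting into \eqref{1bi} makes the cyclic Bianchi sum $R(X,Y,Z,V)+R(Y,Z,X,V)+R(Z,X,Y,V)$ equal to the 4-form $B := \tfrac34 dT - \tfrac12 \sigma^T$. Writing this modified first Bianchi identity at the four cyclic shifts of $(X,Y,Z,V)$, applying the antisymmetries \eqref{r1}, and exploiting that a 4-form satisfies $B(b,c,d,a)=-B(a,b,c,d)$, $B(c,d,a,b)=+B(a,b,c,d)$, $B(d,a,b,c)=-B(a,b,c,d)$, one runs the classical algebraic manipulation that derives pair symmetry from the ordinary first Bianchi identity; the correction terms cancel in the final linear combination thanks to these alternating signs, leaving $R(X,Y,Z,V)=R(Z,V,X,Y)$. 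The main obstacle is this sign bookkeeping: although the underlying algebraic template is standard, one must carefully verify that the 4-form corrections transform with precisely the signs needed for the classical cancellation to survive intact.
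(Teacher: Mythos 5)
Your argument is correct, and it is worth noting that the paper itself offers no proof of this lemma --- it is quoted from \cite[Lemma~3.4]{I} --- so your write-up is a genuinely self-contained derivation from identities already displayed in the paper (\eqref{tsym}, \eqref{dh}, \eqref{1bi}, \eqref{gen}, and \eqref{h-gh} with $t=1$). All three links check out. For $(1)\Leftrightarrow(3)$, the relation $(\LC_XT)(Y,Z,V)=(\sb_XT)(Y,Z,V)+\tfrac12\sigma^T(X,Y,Z,V)$ is exactly \eqref{h-gh} at $t=1$, and since antisymmetry of $\sb T$ in the first two slots together with the 3-form antisymmetry of $T$ already forces total antisymmetry, the Cartan formula collapses to $dT=4\LC T$ precisely when $\LC T$ (equivalently $\sb T$) is a 4-form. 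For $(2)\Rightarrow(1)$, your reduction of \eqref{gen} to $2\sum_{\mathrm{cyc}}R=\tfrac32dT-\sigma^T$ and the resulting formula $(\sb_VT)(X,Y,Z)=-\tfrac14dT(X,Y,Z,V)+\tfrac12\sigma^T(X,Y,Z,V)$ are correct. For $(1)\Rightarrow(2)$, the ``sign bookkeeping'' you flag as the main obstacle does go through: writing the modified first Bianchi identity $\sum_{\mathrm{cyc}}R(\cdot,\cdot,\cdot,W)=B$ at the four cyclic shifts of $(X,Y,Z,V)$ and simply adding them, the left-hand sides collapse via \eqref{r1} to $2\bigl[R(X,Z,V,Y)-R(V,Y,X,Z)\bigr]$ exactly as in the torsion-free case, while the right-hand sides contribute $B-B+B-B=0$ because a cyclic permutation of four letters is odd. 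So the correction term vanishes identically from the final combination and pair symmetry follows; no further hypothesis is needed. The only cosmetic improvement I would suggest is to state this last cancellation explicitly rather than gesturing at ``the classical manipulation,'' since that is the one place where the torsion actually threatens to interfere.
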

An immediate consequence of \eqref{fourf} is the  next
\begin{cor}
Suppose that a  metric connection with  torsion 3-form $T$ has curvature $R\in S^2\Lambda^2$. 

Then the torsion 3-form is closed, $dT=0$, if and only if the torsion is parallel with respect to the Levi-Civita connection, $\LC T=0$.
\end{cor}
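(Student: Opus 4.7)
The corollary is a near-immediate consequence of the preceding Lemma. The approach is to read off from the chain of equivalences \eqref{fourf} the tensorial identity
\begin{equation*}
dT \;=\; 4\,\LC T,
\end{equation*}
which holds exactly under the hypothesis $R\in S^2\Lambda^2$, and to observe that both directions of the claimed equivalence fall out of this one identity.

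The key steps, in order, are as follows. First, identify $R\in S^2\Lambda^2$ with the middle condition $R(X,Y,Z,V)=R(Z,V,X,Y)$ appearing in \eqref{fourf}. Second, invoke the Lemma to conclude $dT=4\,\LC T$. Third, regard this as an equality of $(0,4)$-tensors: the skew-symmetry of the left-hand side forces $\LC T$ to be totally antisymmetric in all four arguments, in agreement with the first equivalence in \eqref{fourf} (which states that $\sb T$ is a 4-form, and is transferred to $\LC T$ via the connection difference \eqref{tsym}). Fourth, read off the equivalence: $dT=0$ if and only if $4\,\LC T=0$, i.e.\ if and only if $\LC T=0$.

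I do not foresee any genuine obstacle, since the Lemma has already carried out the nontrivial step of tying the curvature symmetry to the derivative identity. If one wished to bypass the Lemma and prove the corollary from scratch, the harder part would be to manipulate \eqref{dh} under $R\in S^2\Lambda^2$: one would use the first Bianchi identity \eqref{1bi} together with the total skew-symmetry of $\sb T$ (equivalent to the hypothesis) to fold the three cyclic $(\sb T)$-terms together with the isolated $(\sb_V T)$ term into a single multiple of $\LC T$, absorbing the $\sigma^T$ contribution via \eqref{tsym}. Under the present formulation, however, these computations are already packaged into \eqref{fourf}, so the corollary reduces to a one-line deduction.
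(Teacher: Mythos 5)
Your proof is correct and follows exactly the route the paper intends: the paper labels the corollary "an immediate consequence of \eqref{fourf}", i.e.\ one identifies $R\in S^2\Lambda^2$ with the symmetry $R(X,Y,Z,V)=R(Z,V,X,Y)$ and reads off $dT=4\LC T$, from which the equivalence $dT=0\Leftrightarrow\LC T=0$ is immediate. Your additional remarks about the total antisymmetry of $\LC T$ and the from-scratch alternative are harmless but not needed.
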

 \begin{dfn}
We say that  the curvature of a metric connection with skew-symmetric torsion satisfies the contracted Riemannian second Bianchi identity if 
\begin{equation}\label{RB2}
d(Scal)(X)-2\sum_{i=1}^n(\sb_{e_i}Ric)(X,e_i)=0.
\end{equation}
\end{dfn}
 Following \cite{AF}, we consider  a 1-parameter family of metric connections $\nabla^t$ with torsion $tT$ defined by
 \begin{equation}\nonumber g(\nabla^g_XY,Z)=g(\nabla^t_XY,Z)-\frac{t}2T(X,Y,Z)
 \end{equation}
 yielding the equality (see e.g. \cite{AF})
 \begin{equation}\label{h-gh}
(\nabla^g_XT)(Y,Z,V)=(\nabla^t_XT)(Y,Z,V)+\frac{t}2\sigma^T(X,Y,Z,V).
 \end{equation}
  We continue with the following  result which generalizes [Proposition~2.1]\cite{AF},  [Theorems~3.1,3.2]\cite{FTar} and proves the first part of Theorem~\ref{th1}.
 \begin{thrm}\label{th111}
 The curvature of a metric connection with skew-symmetric torsion $T$ satisfies the Riemannian first Bianchi identity if and only if the identities \eqref{bsk} hold.

 In this case the  3-form $T$ is parallel with respect to the metric connection $\nabla^{1/3}$ with torsion equal to $\frac13 T,  \nabla^{1/3}T=0$.
  In particular, the norm of the 3-form $T$ is constant, $||T||^2=const$.
 \end{thrm}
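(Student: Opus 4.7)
The plan is to derive \eqref{bsk} by combining the two Bianchi-type identities already established in the excerpt, namely \eqref{1bi} and \eqref{1bi1}, under the assumption of \eqref{RB}, and then to reverse the argument.

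For the ``only if'' direction, I would first observe that \eqref{r1} and \eqref{RB} imply the symmetry \eqref{r4}, i.e.\ $R(X,Y,Z,V)=R(Z,V,X,Y)$. Applying this symmetry to the left-hand side of \eqref{1bi1} converts each term $R(V,\cdot,\cdot,\cdot)$ into the form $R(\cdot,\cdot,V,\cdot)$, and after using the antisymmetry \eqref{r1} in the last two slots, the left-hand side of \eqref{1bi1} becomes the negative of the left-hand side of \eqref{RB}, hence vanishes. Thus \eqref{1bi1} yields
\begin{equation*}
(\nabla_V T)(X,Y,Z)=\tfrac{1}{2}dT(X,Y,Z,V).
\end{equation*}
Substituting this into \eqref{1bi} and using that the left-hand side of \eqref{1bi} also vanishes by \eqref{RB}, I get $0=dT-\sigma^T+\tfrac{1}{2}dT$, i.e.\ $dT=\tfrac{2}{3}\sigma^T$. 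Since $dT$ is a $4$-form, the relation $(\nabla_V T)(X,Y,Z)=\tfrac{1}{2}dT(X,Y,Z,V)=-\tfrac{1}{2}dT(V,X,Y,Z)$ shows that $\nabla T$, viewed as a $(0,4)$-tensor, is totally skew-symmetric and equals $-\tfrac{1}{2}dT$, giving $dT=-2\nabla T=\tfrac{2}{3}\sigma^T$, which is \eqref{bsk}.

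For the ``if'' direction I would simply assume \eqref{bsk}, which gives $(\nabla_V T)(X,Y,Z)=-\tfrac{1}{2}dT(V,X,Y,Z)=\tfrac{1}{2}dT(X,Y,Z,V)=\tfrac{1}{3}\sigma^T(X,Y,Z,V)$, and plug this into the right-hand side of \eqref{1bi}:
\begin{equation*}
dT-\sigma^T+(\nabla_V T)(X,Y,Z)=\tfrac{2}{3}\sigma^T-\sigma^T+\tfrac{1}{3}\sigma^T=0,
\end{equation*}
proving \eqref{RB}.

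For the remaining assertions I would use the one-parameter formula \eqref{h-gh}. With $t=1$ and $\nabla T=-\tfrac{1}{3}\sigma^T$ we obtain $\nabla^g T=-\tfrac{1}{3}\sigma^T+\tfrac{1}{2}\sigma^T=\tfrac{1}{6}\sigma^T$. Then \eqref{h-gh} at $t=\tfrac{1}{3}$ gives $\nabla^{1/3}T=\nabla^g T-\tfrac{1}{6}\sigma^T=0$, as claimed. Finally, since $\nabla^{1/3}$ is a metric connection, $\nabla^{1/3}T=0$ implies $d\|T\|^2=0$, so $\|T\|^2$ is constant. None of the steps is particularly hard; the only delicate point is the bookkeeping of signs and the identification of $\nabla T$ as a $4$-form via the rewriting of \eqref{1bi1} using \eqref{r4}, which is the genuine content of the argument.
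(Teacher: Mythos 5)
Your proof is correct and follows essentially the same route as the paper: both directions come from combining the Bianchi-type identities \eqref{1bi} and \eqref{1bi1} (the paper uses \eqref{gen}, from which \eqref{1bi1} is derived, together with \cite[Lemma~3.4]{I} to see that $\nabla T$ is a $4$-form, whereas you extract the skew-symmetry of $\nabla T$ directly from the relation $(\nabla_VT)(X,Y,Z)=\tfrac12 dT(X,Y,Z,V)$ — a minor, self-contained variant). The final step via \eqref{h-gh} at $t=1$ and $t=\tfrac13$ is exactly the paper's argument.
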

 \begin{proof}
 Let \eqref{bsk} hold. Then the covariant derivative $\nabla T$ is a four form. 
 Substitute \eqref{bsk} into the right-hand side of \eqref{1bi} to get that the Riemannian first Bianchi identity \eqref{RB} holds.

 For the converse,  suppose  the curvature $R$ of $\sb$ satisfies the Riemannian first Bianchi identity \eqref{RB}. We use the  well-known algebraic fact that the curvature identities \eqref{r1} and \eqref{RB} imply  the identity \eqref{r4} (see e.g. \cite[Chapter~5]{KN1}).  Now, \cite[Lemma~3.4]{I} tells us  that   $\nabla T$ is a 4-form. Then \eqref{dh}, \eqref{1bi} together with \eqref{RB}  yield $$dT=4\nabla T+2\sigma^T, \qquad dT=\nabla T+\sigma^T.$$
 The last two equalities 
imply \eqref{bsk} holds.

One gets from \eqref{bsk}  $\nabla T=-\frac13\sigma^T$ and \eqref{h-gh} yields $\nabla^{1/3}T=0$, which was first obsered in \cite{AF}.
  \end{proof}

 \subsection{Proof of Theorem~\ref{thzz}}

 \begin{proof}
  We will show that \eqref{zz} implies \eqref{bsk}. Indeed, using \eqref{zz} together with \eqref{r1} we have the next sequence of equalities
 $$-R(V,X,Y,Z)=R(X,V,Y,Z)=R(X,Y,V,Z)=-R(X,Y,Z,V).$$
  Apply the latter to \eqref{gen} to get
 \begin{equation}\label{zz1}3dT-2\sigma^T=0.
 \end{equation}
 Further, \eqref{zz} yields $R(X,Y,Z,V)=R(Z,V,X,Y)$ leading by \cite[Lemma~3.4]{I} that $\nabla T$ is a 4-form which applied to \eqref{dh} gives
\begin{equation*}
\begin{split}
dT(X,Y,Z,V)
=4\nabla T(X,Y,Z,V)+2\sigma^T(X,Y,Z,V)=\frac23\sigma^T(X,Y,Z,V),
 \end{split}
 \end{equation*}
 where we have used \eqref{zz1}. 
 
 Hence, \eqref{bsk} holds and Theorem~\ref{th111} shows the validity of the Riemannian first Bianchi identity \eqref{RB}. 
 Consequently, we  get
 \begin{equation*}
 \begin{split}
 0=R(X,Y,Z,V)+R(Y,Z,X,V)+R(Z,X,Y,V)\\=R(Z,Y,X,V)+R(Y,Z,X,V)+R(Z,X,Y,V)=R(Z,X,Y,V),
 \end{split}
 \end{equation*}
 where we have applied \eqref{zz} to conclude the second identity. The proof is completed.
 \end{proof}
\section{The contracted  second Bianchi identity}
In this section, we investigate the second Bianchi identity for the curvature  of a metric connection with a skew-symmetric  torsion. 

First, we show the validity of an algebraic identity.
\begin{prop}
For  an arbitrary 3-form $T$  the next identity holds
\begin{equation}\label{sigt}
 T_{abc}\sigma^T_{abci}=0.
\end{equation}
\end{prop}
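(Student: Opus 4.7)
The plan is to unwind the definition \eqref{sigma} of $\sigma^T$ in components and then perform the triple contraction $T_{abc}\sigma^T_{abci}$ directly, exploiting only the total skew-symmetry of $T$.

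First I would compute the components of the wedge of a 2-form with itself: one has the standard identity $(\alpha\wedge\alpha)_{abcd}=2(\alpha_{ab}\alpha_{cd}-\alpha_{ac}\alpha_{bd}+\alpha_{ad}\alpha_{bc})$. Setting $\alpha=e_j\lrcorner T$, whose components are $\alpha_{ab}=T_{jab}$, and substituting into \eqref{sigma}, this yields
\begin{equation*}
\sigma^T_{abcd}=T_{jab}T_{jcd}-T_{jac}T_{jbd}+T_{jad}T_{jbc},
\end{equation*}
with the usual Einstein summation over $j$.

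Next I would contract with $T_{abc}$ and reduce the three resulting quadruple sums using the identities of a 3-form. The cyclic identity $T_{abc}=T_{cab}$ gives $T_{abc}T_{jab}=T^2_{cj}$; the antisymmetry $T_{abc}=-T_{bac}$ gives $T_{abc}T_{jac}=-T^2_{bj}$; and directly from the definition of $T^2_{ij}:=T_{iab}T_{jab}$ one has $T_{abc}T_{jbc}=T^2_{aj}$. Substituting these three partial contractions into the displayed expression for $\sigma^T_{abcd}$ and relabelling the dummy indices, all three pieces collapse to the same expression and one obtains $T_{abc}\sigma^T_{abci}=3\,T^2_{aj}T_{jai}$.

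The conclusion is then immediate: $T^2_{aj}=T_{apq}T_{jpq}$ is manifestly symmetric under $a\leftrightarrow j$, whereas $T_{jai}=-T_{aji}$ is antisymmetric under the same swap, so the contraction over $a,j$ vanishes. The only real care in the argument is the sign-bookkeeping in the reduction $T_{abc}T_{jac}=-T^2_{bj}$ (which requires an antisymmetry swap to put the indices into the canonical positions of $T^2$); once the three partial contractions have been correctly identified as $\pm T^2$, the symmetric/antisymmetric mismatch closes the proof at once.
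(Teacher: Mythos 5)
Your argument is correct and is essentially the paper's own proof: both reduce the contraction to $3\,T^2_{aj}T_{jai}$ (the paper writes this as $3T_{abc}T_{abs}T_{sci}$) and conclude by pairing the symmetric tensor $T^2$ against the skew-symmetric $T$. The only cosmetic difference is that you derive the component formula for $\sigma^T$ from the wedge-square identity, whereas the paper quotes it directly as a cyclic sum.
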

\begin{proof}
Using  \eqref{sigma} we calculate
\[T_{abc}\sigma^T_{abci}=T_{abc}\Big(T_{abs}T_{sci}+T_{bcs}T_{sai}+T_{cas}T_{sbi} \Big)=3T_{abc}T_{abs}T_{sci}=0\]
since $T_{abc}T_{abs}$ is symmetric in $c$ and $s$ while $T_{sci}$ is skew-symmetric in $c$ and $s$.
\end{proof}
The next observation expresses the $\sb$-divergence of $\delta T$, 
\begin{prop}\label{genprop}
For a metric connection with torsion 3-form $T$, the next identity holds
\begin{equation}\label{ein10}
2\sb_i\delta T_{ij}
=\delta T_{ia}T_{iaj}.
\end{equation}
\end{prop}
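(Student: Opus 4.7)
The plan is to reduce \eqref{ein10} to the classical identity $\delta^2=0$ on differential forms, after carefully accounting for the difference between the metric connections $\sb$ and $\LC$ when they act on the $2$-form $\delta T$.

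First I would compare $\sb_i\delta T_{ij}$ with its Levi-Civita counterpart $\LC_i\delta T_{ij}$. Starting from the relation $\LC=\sb-\tfrac12 T$ of \eqref{tsym}, a routine computation shows that for any $2$-form $\omega$,
\[
\sb_k\omega_{ij}-\LC_k\omega_{ij}=-\tfrac12 T_{kia}\omega_{aj}-\tfrac12 T_{kja}\omega_{ia}.
\]
Tracing $k=i$ kills the first torsion correction, because $T_{iia}=0$ by total skew-symmetry, while the antisymmetry $T_{ija}=-T_{iaj}$ converts the second correction into $+\tfrac12\,\delta T_{ia}T_{iaj}$. Applying this to $\omega=\delta T$ gives
\[
\sb_i\delta T_{ij}=\LC_i\delta T_{ij}+\tfrac12\,\delta T_{ia}T_{iaj}.
\]

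Next I would observe that the Levi-Civita divergence on the right-hand side vanishes identically. Since $\delta T$ is a $2$-form, the elementary identity $\delta^2=0$ (itself a consequence of $d^2=0$ via $\delta=(-1)^{np+n+1}{*}d{*}$) yields $\delta(\delta T)=0$; but the codifferential of the $2$-form $\delta T$ is, up to sign, precisely the Riemannian divergence $\LC_i\delta T_{ij}$. Hence $\LC_i\delta T_{ij}=0$, and substituting into the previous display delivers the desired identity $2\sb_i\delta T_{ij}=\delta T_{ia}T_{iaj}$.

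The main obstacle is purely bookkeeping: keeping signs and index positions straight when transporting the $\sb$--$\LC$ difference onto a $2$-form, and fixing the sign convention of $\delta$ on $p$-forms of various degrees. The geometric content, however, is clean: \eqref{ein10} is nothing more than the statement $\delta^2T=0$ rewritten with respect to the torsion connection $\sb$, with the $\tfrac12\,\delta T_{ia}T_{iaj}$ term recording the algebraic cost of the translation.
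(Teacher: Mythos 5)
Your proof is correct, and the first half coincides with the paper's: both arguments begin by rewriting $\sb_i\delta T_{ij}$ as $\LC_i\delta T_{ij}+\tfrac12\delta T_{ia}T_{iaj}$, using that the trace of the torsion kills one correction term and the skew-symmetry of $T$ flips the sign of the other. Where you diverge is in how the Levi-Civita divergence is shown to vanish. The paper writes $\LC_i\delta T_{ij}=-\tfrac12\left(\LC_i\LC_s-\LC_s\LC_i\right)T_{sij}$ and expands the commutator by the Ricci identity, whereupon the resulting curvature terms cancel by the symmetry of $Ric^g$ and the Riemannian first Bianchi identity. You instead observe that $\LC_i\delta T_{ij}$ is, up to sign, the codifferential of the $2$-form $\delta T$, so it vanishes by $\delta^2=0$. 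This is a legitimate and cleaner shortcut (the paper's computation is in effect an index-level verification of $\delta^2T=0$), at the mild cost of invoking the standard identification of the codifferential with the Levi-Civita divergence; the paper's version stays entirely self-contained in index notation and makes explicit which curvature symmetries are responsible for the cancellation. Either way the conclusion $2\sb_i\delta T_{ij}=\delta T_{ia}T_{iaj}$ follows.
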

\begin{proof}
Applying  \eqref{tsym}, we calculate using the Ricci identity for the Levi-Civita connection, the symmetricity of its Ricci tensor, and the first Bianchi identity 
\begin{multline*}\sb_i\delta T_{ij}=\LC_i\delta T_{ij}-\frac12\delta T_{is}T_{ijs}=-\frac12\Big(\LC_i\LC_s-\LC_s\LC_i\Big)T_{sij}+\frac12\delta T_{is}T_{isj}=\\
-\frac12\Big(R^g_{issq}T_{qij}+R^g_{isiq}T_{sqj} +R^g_{isjq}T_{isq} \Big) +\frac12\delta T_{is}T_{isj}\\=Ric^g_{iq}T_{iqj}-\frac16\Big(R^g_{isqj}+R^g_{siqj}+R^g_{iqsj}\Big)T_{isq}+\frac12\delta T_{is}T_{isj}=\frac12\delta T_{is}T_{isj}.
\end{multline*}
The proof of Proposition~\ref{genprop} is completed.
\end{proof}
\begin{dfn}
For any 3-form $T$ we define two torsion 1-forms $\theta,\Theta$ naturally associated to $T$ by
\begin{equation*}
\theta_j=\delta T_{ab}T_{jab},\quad \Theta_j=T_{abc}dT_{jabc}.
\end{equation*}
\end{dfn}
Further, we  have
\begin{lemma}\label{l11}
The  1-forms $\theta$ and $\Theta$ are connected by the equality
\begin{equation}\label{e13}
3\theta_j+\Theta_j=\frac12\LC_j||T||^2-3\LC_sT^2_{sj}=\frac12\sb_j||T||^2-3\sb_sT^2_{sj}.
\end{equation}
\end{lemma}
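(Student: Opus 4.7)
The plan is to expand everything in coordinates via the standard formula
$$dT_{jabc} = \LC_j T_{abc} - \LC_a T_{jbc} + \LC_b T_{jac} - \LC_c T_{jab}$$
for the exterior derivative of a 3-form, and then exploit the total skew-symmetry of $T$ to collect terms.

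Contracting the above with $T_{abc}$ produces $\Theta_j$ on the left. The first term gives $T_{abc}\LC_j T_{abc} = \tfrac12 \LC_j ||T||^2$, using the paper's convention $||T||^2 = T_{abc}T_{abc}$ (which is consistent with the derivation of $Scal^g = Scal + \tfrac14 ||T||^2$ from \eqref{rics}). For the three remaining terms, I would relabel dummy indices and apply $T_{bac} = -T_{abc}$ to verify that each of them equals $-T_{abc}\LC_a T_{jbc}$, so altogether they contribute $-3T_{abc}\LC_a T_{jbc}$. This yields
$$\Theta_j = \tfrac12 \LC_j ||T||^2 - 3T_{abc}\LC_a T_{jbc}.$$
Next, I would apply the Leibniz rule to $\LC_s T^2_{sj} = \LC_s(T_{sab}T_{jab})$ and use the sign convention $(\delta T)_{ab} = -\LC_s T_{sab}$ (consistent with the computation in the proof of Proposition~\ref{genprop}) to obtain $T_{sab}\LC_s T_{jab} = \LC_s T^2_{sj} + \theta_j$. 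Substituting into the previous display gives the first asserted equality $3\theta_j + \Theta_j = \tfrac12 \LC_j ||T||^2 - 3\LC_s T^2_{sj}$.

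To pass from $\LC$ to $\sb$ on the right-hand side, I would invoke $\LC = \sb - \tfrac12 T$ from \eqref{tsym}. On the scalar $||T||^2$ the two connections agree. For $\LC_s T^2_{sj}$, the torsion correction to the Leibniz rule on a $(0,2)$-tensor contributes two extra contractions: one proportional to $T_{ssa}T^2_{aj}$, which vanishes by the skew-symmetry of $T$, and one proportional to $T_{sja}T^2_{sa}$, which vanishes because $T_{sja}$ is skew in $(s,a)$ while $T^2_{sa} = T_{scd}T_{acd}$ is symmetric in $(s,a)$. Hence $\LC_s T^2_{sj} = \sb_s T^2_{sj}$, and the second form of the identity follows. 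The only real obstacle is careful bookkeeping of signs and dummy-index relabelings; no deeper analytic input is required.
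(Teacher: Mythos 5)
Your proof is correct and follows essentially the same computation as the paper's: both hinge on contracting the coordinate formula for $dT$ with $T$, using total skew-symmetry to identify the three cross terms, and applying the Leibniz rule with $\delta T_{ab}=-\LC_sT_{sab}$ to bring in $\theta$ and $\LC_sT^2_{sj}$. The only (harmless) organizational difference is that you work throughout with the Levi-Civita expression for $dT$ and pass to $\sb$ at the end, whereas the paper routes the computation through its torsion-connection formula \eqref{dh} together with the auxiliary identities \eqref{sigt} and \eqref{h-gh}.
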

\begin{proof}
We obtain from \eqref{dh} 
\begin{equation}\label{e12}
\Theta_j=-dT_{abcj}T_{abc}=-3\LC_aT_{bcj}T_{abc}+\frac12\LC_j||T||^2=-3\sb_aT_{bcj}T_{abc}+\frac12\sb_j||T||^2, 
\end{equation}
 where we used \eqref{h-gh} and  \eqref{sigt}.
 
On the other hand, we calculate in view of \eqref{sigt} and \eqref{e12} that
\begin{multline}\label{ein5}
\theta_j=\delta T_{ia}T_{iaj}=-\sb_sT_{sia}T_{iaj}=-\sb_sT^2_{sj}+\sb_sT_{iaj}T_{sia}=-\LC_sT^2_{sj}+\LC_sT_{iaj}T_{sia}\\=
-\LC_sT^2_{sj}+\frac13(\LC_sT_{iaj}+\LC_iT_{asj}T_{sia}+\LC_aT_{sij})T_{sia}\\=-\LC_sT^2_{sj}-\frac13dT_{jsia}T_{sia}+\frac16\LC_j||T||^2=-\LC_sT^2_{sj}-\frac13\Theta_j+\frac16\LC_j||T||^2.
\end{multline}
The lemma is proved.
\end{proof}
Note that if the torsion is $\sb$-paralel then $\theta=\delta T\lrcorner T=0$ and $\Theta=T\lrcorner dT=2T\lrcorner\sigma^T=0$ due to \eqref{dh} and \eqref{sigt}.
\begin{prop}\label{2bif}
The contracted second Bianchi identity for the curvature of the torsion connection $\sb$ reads
\begin{equation}\label{e1}
d(Scal)(X)-2\sum_{i=1}^n(\sb_{e_i}Ric)(X,e_i)+\frac16d||T||^2(X)+\theta(X)+\frac16\Theta(X)=0.
\end{equation}
If the torsion 1-forms satisfy the identity
\begin{equation*}
6\theta +\Theta=0
\end{equation*} 
then
\begin{equation}\label{eqdT}
d(Scal)_j-2\sb_iRic_{ji}+\frac16\sb_j||T||^2=0.
\end{equation}
In particular, if the 3-form $T$ is harmonic, $dT=\delta T=0$ then \eqref{eqdT} holds.
\end{prop}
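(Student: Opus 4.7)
The plan is to transfer the classical Riemannian contracted second Bianchi identity $\nabla^g_j Scal^g = 2\nabla^g_i Ric^g_{ij}$ to the torsion connection $\nabla$. First I would use \eqref{rics}, together with the symmetry of $Ric^g$ and $T^2_{ij}$ and the antisymmetry of $\delta T$, to rewrite
$$
Ric_{ji} = Ric^g_{ij} + \tfrac12\delta T_{ij} - \tfrac14 T^2_{ij},\qquad Scal = Scal^g - \tfrac14||T||^2,
$$
so that $2\nabla_i Ric_{ji}$ splits into three separate divergences, one for each summand.

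Second, I would replace $\nabla$ by $\nabla^g$ on the $Ric^g$-piece by means of \eqref{tsym}. The resulting correction is a sum of two terms of the form $T_{iil}Ric^g_{lj}$ and $T_{ijl}Ric^g_{il}$; the first is zero by total skew-symmetry of $T$, while the second pairs a factor antisymmetric in $i,l$ with one symmetric in $i,l$, and so also vanishes. Consequently
$$
2\nabla_i Ric^g_{ij}=2\nabla^g_i Ric^g_{ij}=\nabla_j Scal^g=\nabla_j Scal+\tfrac14\nabla_j||T||^2.
$$

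Third, for the remaining two divergences I would invoke Proposition~\ref{genprop}, which after rewriting $\delta T_{ia}T_{iaj}=\theta_j$ via a cyclic relabelling of the indices of $T$ gives $\nabla_i\delta T_{ij}=\tfrac12\theta_j$, and Lemma~\ref{l11} rearranged as
$$
\nabla_s T^2_{sj}=\tfrac16\nabla_j||T||^2-\theta_j-\tfrac13\Theta_j.
$$
Adding everything and collecting coefficients — the $||T||^2$ contributions combine as $\tfrac14-\tfrac1{12}=\tfrac16$, while the two $\tfrac12\theta_j$ pieces add to $\theta_j$ — yields exactly \eqref{e1}.

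The remaining claims are then immediate: the hypothesis $6\theta+\Theta=0$ cancels the last two terms of \eqref{e1} and produces \eqref{eqdT}, and when $T$ is harmonic the vanishing of $\delta T$ and of $dT$ forces $\theta$ and $\Theta$ separately to vanish, as noted just before the statement. The delicate point of the argument is the bookkeeping in the first two steps — keeping careful track of which tensors are symmetric and which antisymmetric under the relevant index swaps — so that $Ric_{ji}$ is correctly re-expressed and the $\nabla\to\nabla^g$ correction on $Ric^g$ genuinely disappears; once that is in place, the rest is linear substitution of the results already established in Proposition~\ref{genprop} and Lemma~\ref{l11}.
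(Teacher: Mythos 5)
Your proof is correct, but it takes a genuinely different route from the paper. The paper starts from the second Bianchi identity for the torsion connection itself, equation \eqref{secB}, traces it twice, and then uses \eqref{rics} together with the algebraic curvature identity \eqref{1bi1} to convert the curvature contractions $T_{ijk}\bigl(R(X,e_i,e_j,e_k)+\text{cyclic}\bigr)$ into the terms $\theta$, $\Theta$ and $\sb\|T\|^2$. You instead never touch the curvature of $\sb$: you transfer the classical contracted second Bianchi identity $2\LC_iRic^g_{ij}=\LC_jScal^g$ across the relation \eqref{rics}, observing that the $\sb\to\LC$ correction on the $Ric^g$-divergence dies because $T_{iia}=0$ and because $T_{ija}Ric^g_{ia}$ pairs an antisymmetric with a symmetric factor, and then you feed in Proposition~\ref{genprop} for the $\delta T$-divergence and Lemma~\ref{l11} for the $T^2$-divergence. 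I checked the bookkeeping: $Ric_{ji}=Ric^g_{ij}+\tfrac12\delta T_{ij}-\tfrac14T^2_{ij}$ is the correct inversion of \eqref{rics}, $\delta T_{ia}T_{iaj}=\theta_j$ by cyclicity of $T$, the coefficients combine as $\tfrac14-\tfrac1{12}=\tfrac16$ and $\tfrac12\theta+\tfrac12\theta=\theta$, and the signs match \eqref{e1}; there is no circularity since Proposition~\ref{genprop} and Lemma~\ref{l11} are proved independently of \eqref{e1}. The trade-off: the paper's derivation is intrinsic to $\sb$ and produces the intermediate once-traced identity \eqref{secric} as a byproduct, while your argument is more elementary, relying only on the standard Riemannian identity plus the two already-established divergence formulas, and makes transparent that \eqref{e1} is equivalent to the classical identity rewritten in torsion-connection quantities.
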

\begin{proof}
The second Bianchi identity for  the curvature of a metric connection $\sb$ with torsion $T$ is
\begin{multline}\label{secB}
(\sb_XR)(V,Y,Z,W)+(\sb_VR)(Y,X,Z,W)+(\sb_YR)(X,V,Z,W)\\+R(T(X,V),Y,Z,W)+R(T(V,Y),X,Z,W)+R(T(Y,X),V,Z,W)=0.
\end{multline}
Take the trace of \eqref{secB}
 to get 
\begin{equation}\label{secric}
\begin{split}
(\sb_XRic)(Y,Z)+\sum_{i=1}^n(\sb_{e_i}R)(Y,X,Z,e_i)-(\sb_YRic)(X,Z)\\
+\sum_{i,j=1}^n\Big[T(X,e_i,e_j)R(e_j,Y,Z,e_i) +T(e_i,Y,e_j)R(e_j,X,Z,e_i)\Big]-\sum_{i=1}^nT(Y,X,e_i)Ric(e_i,Z)=0.
\end{split}
\end{equation}
The trace in \eqref{secric} together with \eqref{rics} and \eqref{1bi1}  yields
\begin{equation}\label{rt}
\begin{split}
0=d(Scal)(X)-2\sum_{i=1}^n(\sb_{e_i}Ric)(X,e_i)-2\sum_{i,j=1}^nT(X,e_i,e_j)Ric(e_i,e_j)\\+\frac13\sum_{i,j,k=1}^nT(e_i,e_j,e_k)\Big[R(X,e_i,e_j,e_k)+R(X,e_j,e_k,e_i)+R(X,e_k,e_i,e_j)\Big]\\
=d(Scal)X)-2\sum_{i=1}^n(\sb_{e_i}Ric)(X,e_i)+\sum_{i,j=1}^nT(X,e_i,e_j)\delta T(e_i,e_j)+\frac16\sb_X||T||^2\\+\frac16\sum_{i,j,k=1}^nT(e_i,e_j,e_k)dT(X,e_i,e_j,e_k).
\end{split}
\end{equation}
which proves \eqref{e1}. The proof of the Proposition~\ref{2bif} is completed.
\end{proof}
Apply \eqref{e12} to the last term of \eqref{e1} to get
\begin{cor}
The contracted second Bianchi identity for  the curvature of the torsion connection has also the form
\begin{equation}\label{2bi}
d(Scal)_j-2\sb_sRic_{js}+\frac14\sb_j||T||^2+\theta_j-\frac12\sb_iT_{jka}T_{ijk}=0.
\end{equation}
\end{cor}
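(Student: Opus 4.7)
The proof of this corollary is purely an algebraic substitution: I plug the formula \eqref{e12} for $\Theta_j$ into the identity \eqref{e1} of Proposition~\ref{2bif} and collect like terms. The suggestion in the text (``Apply \eqref{e12} to the last term of \eqref{e1}'') tells us exactly this.

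In more detail, the plan is as follows. Starting from
\begin{equation*}
d(\mathrm{Scal})_j - 2\sb_i\mathrm{Ric}_{ji} + \tfrac16\sb_j\|T\|^2 + \theta_j + \tfrac16\Theta_j = 0,
\end{equation*}
I use the relation established in Lemma~\ref{l11}, specifically the expression \eqref{e12}
\begin{equation*}
\Theta_j = -3\,\sb_aT_{bcj}T_{abc} + \tfrac12\sb_j\|T\|^2,
\end{equation*}
so that $\tfrac16\Theta_j = -\tfrac12\sb_aT_{bcj}T_{abc} + \tfrac1{12}\sb_j\|T\|^2$. Substituting this into \eqref{e1}, the two scalar-derivative terms combine via $\tfrac16 + \tfrac1{12} = \tfrac14$, producing the desired coefficient $\tfrac14\sb_j\|T\|^2$ and leaving the remaining torsion-cubic piece $-\tfrac12\sb_a T_{bcj} T_{abc}$. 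A relabelling of the summation indices together with the cyclic/skew-symmetry of the $3$-form $T$ rewrites this into the form $-\tfrac12\sb_iT_{jka}T_{ijk}$ appearing in \eqref{2bi}, giving the claimed identity.

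There is no real obstacle here: the work is completely formal once \eqref{e1} and \eqref{e12} are in hand. The only point requiring a little care is tracking the coefficients (checking that $\tfrac16 + \tfrac1{12} = \tfrac14$) and making sure the index relabelling in the cubic torsion term uses only the total antisymmetry of $T$ (equivalently, cyclic invariance). No further structural ingredient is needed beyond what has already been developed in the section.
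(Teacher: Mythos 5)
Your proposal is correct and coincides with the paper's own one-line proof: substitute \eqref{e12} into the last term of \eqref{e1} and combine $\tfrac16\sb_j\|T\|^2+\tfrac1{12}\sb_j\|T\|^2=\tfrac14\sb_j\|T\|^2$. The only caveat is that the cubic term you obtain, $-\tfrac12\sb_aT_{bcj}T_{abc}$, agrees with \eqref{2bi} only up to an apparent index misprint in the paper (the last factor there should read $T_{ika}$ rather than $T_{ijk}$, as the free index $j$ cannot also be summed), which is a defect of the printed statement and not of your argument.
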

 \begin{cor}
 The   curvature of the torsion connection satisfies the contracted Riemannian second Bianchi identity \eqref{RB2} if and only if the next equality holds
\begin{equation}\label{biii}
6\theta_j+\Theta_j+\sb_j||T||^2=0\Longleftrightarrow 4\theta_j+\sb_j||T||^2-2\sb_iT_{jka}T_{ijk}=0.
\end{equation} 
In particular, the equation \eqref{biii} holds for any Ricci flat torsion connection.
 \end{cor}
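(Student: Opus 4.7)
The plan is to read the corollary as an immediate reformulation of the two trace identities already established, namely \eqref{e1} in Proposition~\ref{2bif} and its equivalent form \eqref{2bi}. Both give formulas for the obstruction
$$\mathcal{B}(X):=d(Scal)(X)-2\sum_{i=1}^n(\sb_{e_i}Ric)(X,e_i)$$
to the contracted Riemannian second Bianchi identity \eqref{RB2}; the statement \eqref{RB2} is just the vanishing of $\mathcal{B}$.

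First, I would substitute into \eqref{e1} to get
$$\mathcal{B}(X)=-\tfrac16 d\|T\|^2(X)-\theta(X)-\tfrac16\Theta(X),$$
so that $\mathcal{B}\equiv 0$ is equivalent, after multiplying by $-6$, to the first equality $6\theta_j+\Theta_j+\sb_j\|T\|^2=0$ in \eqref{biii}. Next, using the already proved identity \eqref{2bi}, I would rewrite
$$\mathcal{B}_j=-\tfrac14\sb_j\|T\|^2-\theta_j+\tfrac12\sb_iT_{jka}T_{ijk},$$
so that $\mathcal{B}\equiv 0$ is equivalent, after multiplying by $-4$, to the second equality $4\theta_j+\sb_j\|T\|^2-2\sb_iT_{jka}T_{ijk}=0$. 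Having both, the two conditions are mutually equivalent because each is equivalent to $\mathcal{B}=0$; alternatively, one may check their equivalence directly by eliminating $\Theta$ using Lemma~\ref{l11}, since $3\theta_j+\Theta_j=\tfrac12\sb_j\|T\|^2-3\sb_sT^2_{sj}$ and $T^2_{sj}=T_{sab}T_{jab}$.

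For the final assertion I would simply observe that if $Ric\equiv 0$ then $Scal\equiv 0$ and $\sb Ric\equiv 0$, so $\mathcal{B}\equiv 0$ trivially and therefore \eqref{biii} holds as an automatic consequence of the equivalence just established.

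I do not expect any genuine obstacle here: the corollary is essentially a book-keeping statement, and the only subtlety is to make sure the two algebraic forms of \eqref{biii} are consistent with both \eqref{e1} and \eqref{2bi}, which reduces to invoking Lemma~\ref{l11} once. No new computation is needed beyond what has already been done in Proposition~\ref{2bif} and the corollary \eqref{2bi} that follows it.
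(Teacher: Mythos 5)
Your proposal is correct and follows exactly the route the paper intends: read \eqref{e1} and \eqref{2bi} as two expressions for the obstruction $d(Scal)_j-2\sb_iRic_{ji}$, so that \eqref{RB2} is equivalent to the vanishing of the remaining terms in each, which after clearing denominators gives precisely the two forms in \eqref{biii}; the Ricci-flat case is then immediate. No gaps.
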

 Note that a special case of Proposition~\ref{2bif} and the above corollaries,  when the torsion is $\sb$-parallel, is given in \cite[Corollary~2.6]{AFer}.
\begin{thrm}\label{s2l2}
Let  the curvature $R$ of a  metric connection $\sb$ with skew-symmetric torsion $T$  satisfies $R\in S^2\Lambda^2$, i.e. \eqref{r4} holds. 

Then   the curvature of   $\sb$ satisfies the contracted Riemannian second Bianchi identity \eqref{RB2} if and only if the norm of the torsion is a constant, $||T||^2=const$.

In particular, if $\sb$ is Ricci flat, $Ric=0$, then the norm of the torsion  is  constant, $||T||=const.$
\end{thrm}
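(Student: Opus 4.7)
My plan is to reduce the equivalence to the criterion \eqref{biii} already established above, after first translating the hypothesis \eqref{r4} via Lemma \eqref{fourf}. That lemma gives us that $\nabla T$ is a totally skew 4-tensor and that $dT=4\nabla^g T$. Combining the latter with the $t=1$ case of \eqref{h-gh}, namely $\nabla^g T=\nabla T+\tfrac12\sigma^T$, yields the working identity
\begin{equation*}
dT=4\nabla T+2\sigma^T,
\end{equation*}
which I will use throughout.

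The next step is to show that the torsion 1-form $\theta$ vanishes. Since $\sigma^T$ is skew in its first two entries, one has $\delta T_{ij}=-\nabla^g_s T_{sij}=-\nabla_s T_{sij}$; but the right-hand side is a trace of the totally skew 4-tensor $\nabla T$ over two of its slots, hence zero. Thus $\delta T=0$, so $\theta=\delta T\lrcorner T=0$. To compute $\Theta$, I contract the working identity above with $T$:
\begin{equation*}
\Theta_j=T_{abc}\,dT_{jabc}=4\,T_{abc}\,\nabla_j T_{abc}+2\,T_{abc}\,\sigma^T_{jabc}.
\end{equation*}
The second term vanishes by the algebraic identity \eqref{sigt} (after pulling the index $j$ to the last position), while the first equals $2\nabla_j\|T\|^2$ by the convention already used in \eqref{e12}.

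Substituting $\theta=0$ and $\Theta_j=2\nabla_j\|T\|^2$ into the criterion \eqref{biii} collapses it to $3\nabla_j\|T\|^2=0$, which is precisely the condition $\|T\|^2=const$. The Ricci flat statement is then immediate, since $Ric=0$ makes \eqref{RB2} trivially hold. The only place where care is needed is the bookkeeping of antisymmetries of $\nabla T$ and $\sigma^T$ in the two vanishings above; once Lemma \eqref{fourf} has been invoked the rest is routine.
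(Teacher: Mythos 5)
Your proposal is correct and follows essentially the same route as the paper: invoke Lemma \eqref{fourf} to make $\nabla T$ a $4$-form, deduce $\delta T=0$ (the paper gets this from the symmetry of $Ric$ via \eqref{rics}, you from tracing the $4$-form $\nabla T$ — both valid), and substitute into the already-established criterion for the contracted second Bianchi identity. The only cosmetic difference is that you feed $\theta=0$ and $\Theta_j=2\nabla_j\|T\|^2$ into \eqref{biii}, while the paper substitutes directly into \eqref{2bi}; both collapse to $d\|T\|^2=0$.
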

\begin{proof}
The condition \eqref{r4}  implies the Ricci tensor is symmetric and \eqref{rics} yields $\delta T=0$.  Now, taking into account \eqref{fourf} we obtain from \eqref{2bi} 
\begin{equation*}\label{2biff}
d(Scal)(X)-2\sum_{i=1}^n(\sb_{e_i}Ric)(X,e_i)+\frac12\sb_X||T||^2=0
\end{equation*}
which completes the proof of Theorem~\ref{s2l2} since any Ricci flat connection satisfies the contracted Rieamannian second Bianchi identity.
\end{proof}
Theorem~\ref{th111} and Theorem~\ref{s2l2} imply
\begin{cor}\label{co1}
If the curvature of the torsion connection satisfies the Riemannian first Bianchi identity \eqref{RB} then it satisfies the contracted Riemannian  second Bianchi identity \eqref{RB2}.
\end{cor}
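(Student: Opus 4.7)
The strategy is simply to combine Theorem~\ref{th111} and Theorem~\ref{s2l2}, since together they cover exactly the two hypotheses needed for the conclusion. My plan is as follows.

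First, I would invoke Theorem~\ref{th111}: the assumption that $R$ satisfies the Riemannian first Bianchi identity \eqref{RB} gives the identities \eqref{bsk}, namely $dT=-2\nabla T=\tfrac23\sigma^T$, and in particular yields that $\|T\|^2$ is constant on $M$. This takes care of one of the two ingredients needed to apply Theorem~\ref{s2l2}.

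Second, I would observe that the Riemannian first Bianchi identity \eqref{RB} together with the antisymmetries \eqref{r1} is a purely algebraic statement which implies the pair symmetry \eqref{r4}, i.e.\ $R\in S^2\Lambda^2$. This is the standard consequence recalled in the paper just after the definition of the Riemannian first Bianchi identity, so it requires no work.

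Third, with both hypotheses of Theorem~\ref{s2l2} now verified ($R\in S^2\Lambda^2$ and $\|T\|^2=\mathrm{const}$), the theorem immediately yields the contracted Riemannian second Bianchi identity \eqref{RB2}, which is precisely the claim of the corollary. There is no real obstacle here; the entire content of the corollary is the observation that Theorem~\ref{th111} supplies exactly the missing constancy of $\|T\|^2$ required to feed Theorem~\ref{s2l2}, so the proof reduces to one sentence invoking these two results in sequence.
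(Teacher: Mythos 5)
Your argument is exactly the paper's: Theorem~\ref{th111} gives $\|T\|^2=\mathrm{const}$, the algebraic fact that \eqref{RB} together with \eqref{r1} implies \eqref{r4} gives $R\in S^2\Lambda^2$, and Theorem~\ref{s2l2} then yields \eqref{RB2}. The proposal is correct and matches the paper's proof.
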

\subsection{Proof of Theorem~\ref{th1}}
The proof of Theorem~\ref{th1} follows from Theorem~\ref{th111} and Corollary~\ref{co1}.

\begin{rmrk}
It is known that if the curvature of a metric connection satisfies the Riemannian first Bianchi identity \eqref{RB}  then it satisfies the curvature identity \eqref{r4} but the converse is not true in general. 

In some cases the converse is  true. If the torsion connection has a special holonomy, contained, for example in the Lie group $SU(3)$ in dimension six,  or in the exeptional Lie group $G_2$ in dimension seven the first Bianchi identity implies the vanishing of the Ricci tensor, $Ric=0$. It is shown  recently in \cite {IS1} and \cite{IS2} that \eqref{r4} together with $Ric=0$ for a torsion connection with holonomy contained in $SU(3)$ or in $G_2$ imply  that the Riemannian  first Bianchi identity \eqref{RB} follows.
\end{rmrk}

\section{The $\sb$-Einstein condition}
Since the Ricci tensor of the torsion connection is not symmetric, the usual Einstein condition seems to be restrictive. We consider the following weaker condition introduced by Agricola and Ferreira \cite{AFer}
\begin{dfn}\cite[Definition~2.2]{AFer}
A metric connection with skew-symmetric torsion is said to be $\sb$-Einsten if the symmetric part of the Ricci tensor is a scalar multiple of the metric,
\begin{equation*}
\frac{Ric(X,Y)+Ric(Y,X)}2=\lambda g(X,Y).
\end{equation*}
\end{dfn}
In view of \eqref{rics} the $\sb$-Einstein condition is equivalent to 
\begin{equation}\label{ein2}
Ric(X,Y)=\frac{Scal}{n}g(X,Y)-\frac12\delta T(X,Y).
\end{equation}
It is shown in \cite[Theorem~2.1]{AFer} that on a compact Riemanian manifold $(M,g,T)$ with a Riemannian metic $g$ and a 3-form $T$, the critical points of the $\sb$-Einstein-Hilbert functional 
\[\mathcal{L}(g,T)=\int_MScal. Vol_g\]
are precisely the pairs $(g,T)$ satisfying the $\sb$-Einstein condition.

We have
\begin{thrm}\label{mainE}
Let a metric connection with skew-symmetric torsion $T$ be  $\sb$-Einstein.
\begin{itemize}
\item[a)] Then  the next identity holds
\begin{equation}\label{ein6}
\frac{n-2}{n}d(Scal)_j-\frac12\LC_sT^2_{sj}+\frac14\sb_j||T||^2=0.
\end{equation}
\item[b)]  If the torsion 1-forms satisfy the identity
\begin{equation}\label{t11}
3\theta+\Theta=0.
\end{equation}
then the scalar curvature is determined by the norm of the torsion up to a constant $C$ due to 
\begin{equation}\label{ein9}Scal =-\frac{n}{6(n-2)}||T||^2+C\quad  and \quad Scal^g=Scal+\frac14||T||^2=\frac{n-6}{12(n-2)}||T||^2+C.
  \end{equation}
  In particulart, if the 3-form $T$ is harmonic, $dT=\delta T=0$ then \eqref{ein9} holds.
\item[c)] If the curvature of the torsion connection $R\in S^2\Lambda^2$ then the scalar curvature and  the norm of the torsion satisfy the next identity with  a constant $B$ 
 \begin{equation}\label{ein8}
Scal=-\frac{n}{2(n-2)}||T||^2+B,\quad Scal^g=-\frac{n+2}{4(n-2)}||T||^2+B.
\end{equation}
In particular, if the  scalar curvature of the torsion connection is constant in b) and c)  then the norm of the torsion is constant, $||T||^2=const$.
\end{itemize}
\end{thrm}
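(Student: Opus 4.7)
The plan is to combine the $\sb$-Einstein condition \eqref{ein2} with the contracted second Bianchi identity \eqref{e1} (equivalently \eqref{2bi}) and Lemma~\ref{l11}, then specialize the resulting identity in parts (b) and (c) using the respective extra hypothesis.

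For part (a), I first take the $\sb$-divergence of \eqref{ein2}, which yields $\sb_s Ric_{js} = \tfrac{1}{n}d(Scal)_j - \tfrac12 \sb_s \delta T_{js}$. Using the skew-symmetry $\delta T_{js}=-\delta T_{sj}$, Proposition~\ref{genprop}, and the cyclic identity $T_{abj}=T_{jab}$, one evaluates $\sb_s\delta T_{js}=-\tfrac12\theta_j$, so that $\sb_s Ric_{js}=\tfrac{1}{n}d(Scal)_j+\tfrac14 \theta_j$. Substituting into \eqref{e1} and collecting terms gives $\tfrac{n-2}{n}d(Scal)_j+\tfrac12\theta_j+\tfrac16\sb_j\|T\|^2+\tfrac16\Theta_j=0$. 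The combination $\tfrac12\theta_j+\tfrac16\Theta_j=\tfrac16(3\theta_j+\Theta_j)$ is then eliminated by Lemma~\ref{l11}, producing $\tfrac{1}{12}\sb_j\|T\|^2-\tfrac12\sb_s T^2_{sj}$. A small but essential algebraic check is that $\sb_s T^2_{sj}=\LC_s T^2_{sj}$: this holds because $T^2_{sa}$ is symmetric in $s,a$ while the correction term $T_{sja}T^2_{sa}$ is antisymmetric in $s,a$ and hence vanishes on summation. Putting the pieces together yields \eqref{ein6}.

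For part (b), the hypothesis $3\theta+\Theta=0$ combined with \eqref{e13} forces $\sb_s T^2_{sj}=\tfrac16\sb_j\|T\|^2$. Substituting this into \eqref{ein6} collapses it to $\tfrac{n-2}{n}d(Scal)_j+\tfrac16\sb_j\|T\|^2=0$, which integrates to $Scal=-\tfrac{n}{6(n-2)}\|T\|^2+C$; the formula for $Scal^g$ follows at once from the second line of \eqref{rics}. The case of harmonic $T$ is automatic, since $dT=\delta T=0$ forces $\theta=\delta T\lrcorner T=0$ and $\Theta=T\lrcorner dT=0$.

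For part (c), the curvature symmetry \eqref{r4} makes $Ric$ symmetric, hence by \eqref{rics} we have $\delta T=0$ and therefore $\theta=0$; moreover \eqref{fourf} gives that $\sb T$ is a totally skew $4$-form. The crucial computation is
\[
\sb_s T^2_{sj}= (\sb_s T_{sab})T_{jab}+T_{sab}\sb_s T_{jab}=T_{sab}\sb_s T_{jab}=-T_{sab}\sb_j T_{sab}=-\tfrac12\sb_j\|T\|^2,
\]
where the first equality uses $\sb_s T_{sab}=-\delta T_{ab}=0$ and the third uses total skew-symmetry of $\sb T$ in the pair $(s,j)$. Inserting this into \eqref{ein6} gives $\tfrac{n-2}{n}d(Scal)_j+\tfrac12\sb_j\|T\|^2=0$, which integrates to \eqref{ein8}; the $Scal^g$ formula is again immediate from \eqref{rics}. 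The final "in particular" claim in parts (b) and (c) follows from the fact that the proportionality constants $\tfrac{n}{6(n-2)}$ and $\tfrac{n}{2(n-2)}$ are nonzero for $n\geq 3$, so $Scal$ constant is equivalent to $\|T\|^2$ constant.

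The main technical obstacle is part (a): one must track signs and coefficients carefully when converting $\sb_s\delta T_{js}$ to $\theta_j$ via Proposition~\ref{genprop}, and verify that the $\sb$-divergence of the symmetric tensor $T^2$ coincides with its Levi-Civita divergence. Once \eqref{ein6} is established, parts (b) and (c) reduce to short algebraic substitutions using, respectively, the explicit form of $\sb_s T^2_{sj}$ extracted from Lemma~\ref{l11} and from the 4-form property of $\sb T$.
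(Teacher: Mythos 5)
Your proof is correct and follows essentially the same route as the paper's: you compute $\sb_sRic_{js}=\tfrac1n d(Scal)_j+\tfrac14\theta_j$ from \eqref{ein2} via Proposition~\ref{genprop}, insert it into the contracted second Bianchi identity \eqref{e1}, and eliminate $3\theta+\Theta$ with Lemma~\ref{l11}, exactly as in \eqref{ein3}--\eqref{ein11}. Parts (b) and (c), including the computation $\sb_sT^2_{sj}=-\tfrac12\sb_j\|T\|^2$ from the $4$-form property of $\sb T$, also match the paper's argument, and your coefficient bookkeeping is accurate throughout.
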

\begin{proof}
We have from \eqref{ein2} and \eqref{ein10} that 
\begin{equation}\label{ein3}
\sb_iRic_{ji}=\frac{d(Scal)_j}{n}-\frac12\sb_i\delta T_{ji}=
\frac{d(Scal)_j}{n}+\frac14\delta T_{ia}T_{iaj}=\frac{d(Scal)_j}{n}+\frac14\theta_j.
\end{equation}

Substitute \eqref{ein3} and \eqref{ein5}  into \eqref{rt} to get
\begin{multline}\label{ein11}
0=\frac{n-2}{n}d(Scal)_j+\frac12\delta T_{ia}T_{iaj}+\frac16\sb_j||T||^2+\frac16T_{abc}dT_{jabc}\\=\frac{n-2}{n}d(Scal)_j-\frac12\LC_sT^2_{sj}+\frac14\sb_j||T||^2
\end{multline}
since $\sb_j||T||^2=\LC_j||T||^2$ due to \eqref{sigt}. This proves \eqref{ein6}.

If $3\theta+\Theta=0$ then \eqref{ein11} takes the form
\begin{equation}\label{ein7}
0=d(Scal)_j-2\sb_iRic_{ij}+\frac16\sb_j||T||^2=d(\frac{n-2}{n}Scal+\frac16||T||^2)_j.
\end{equation}
  Hence, \eqref{ein9} holds.

 If  $R\in S^2\Lambda^2$ then $\sb T$ is a four form due to \cite[Lemma~3.4]{I}. Consequently, we have $\delta T=0$ and $\sb_sT^2_{sj}=-\frac12\sb_j||T||^2$ because of  \eqref{ein5}. Now, \eqref{ein11} takes the form
\[
0=d(\frac{n-2}{n}Scal+\frac12||T||^2),
\]
which implies \eqref{ein8}. The proof is completed.
\end{proof}
We remark that due to \eqref{e13} the condition \eqref{t11} in Theorem~\ref{mainE} is equivalent to the condition
\[3\theta+\Theta=0 \Longleftrightarrow  \sb_j||T||^2=6\sb_iT^2_{ij}.\]
The second equality in \eqref{ein9} leads to the next
\begin{cor}\label{six} Let 
 $(M,g,T)$  be a six dimensional Riemannian manifold and  the 3-form $T$ satisfies \eqref{t11}, (in particular $T$ be a harmonic 3-form). If the  metric connection with  torsion $T$ is $\sb$-Einstein then the Riemannian scalar curvature is constant.
\end{cor}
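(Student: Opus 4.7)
The plan is to derive the corollary as an immediate specialization of part b) of Theorem~\ref{mainE} to dimension $n=6$. Since we are given that the metric connection with torsion $T$ is $\sb$-Einstein and that the torsion $1$-forms satisfy $3\theta+\Theta=0$ (which is the defining hypothesis \eqref{t11}, and which holds in particular when $T$ is harmonic by \eqref{dh} together with \eqref{sigt}), we are exactly in the setting where \eqref{ein9} applies.

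Substituting $n=6$ into the second equality of \eqref{ein9} yields
\begin{equation*}
Scal^g \;=\; \frac{n-6}{12(n-2)}\,\|T\|^2 + C \;=\; \frac{0}{48}\,\|T\|^2 + C \;=\; C,
\end{equation*}
so the Riemannian scalar curvature equals the constant $C$. This gives the desired conclusion with essentially no additional work beyond invoking the theorem.

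There is no real obstacle here; the statement is a dimensional coincidence that makes the coefficient in front of $\|T\|^2$ vanish precisely at $n=6$. The only point to double-check is that the hypothesis $3\theta+\Theta=0$ is genuinely weaker than harmonicity of $T$, so that Corollary~\ref{six} applies in the stated generality; this follows from the computations preceding \eqref{t11}, where harmonicity $dT=\delta T=0$ immediately forces both $\theta=\delta T\lrcorner T=0$ and $\Theta=T\lrcorner dT=0$, hence $3\theta+\Theta=0$.
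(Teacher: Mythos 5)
Your proof is correct and is exactly the paper's argument: the corollary is stated immediately after the observation that the second equality in \eqref{ein9} has coefficient $\frac{n-6}{12(n-2)}$, which vanishes at $n=6$, so $Scal^g=C$. Your additional check that harmonicity of $T$ forces $\theta=\Theta=0$ (hence \eqref{t11}) matches the paper's remarks as well.
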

\begin{rmrk}
Note that if the torsion is harmonic then the last equality in \eqref{ein7} follows from \cite[Proposition~3.47]{GFS}.
\end{rmrk}
We also remark that if $(M,g,T)$ is Ricci flat with  closed torsion  then the  equality \eqref{ein9} yields the norm of the torsion is constant which recovers the recent result  \cite[Lemma~2.21]{Lee}.
\section{Generalized gradient Ricci solitons. Proof of Theorem~\ref{mainR}}
One fundamental consequence of Perelman's energy formula for Ricci flow is that compact steady solitons for Ricci flow are automatically gradient.  Adapting these energy functionals to generalized Ricci
flow it is proved in  \cite[Chapter~6]{GFS}  that steady generalized Ricci solitons on compact manifolds are automatically gradient, and moreover,  satisfy k = 0. 

We recal \cite[Definition~4.31]{GFS} that a Riemannian manifold $(M,g,T,f )$ with a closed 3-form $T$ and a smooth function $f$ is a generalized gradient Ricci soliton with $k=0$ if one has 
\begin{equation}\label{gein1}
Ric^g_{ij}=\frac14T^2_{ij}-\LC_i\LC_j f, \qquad \delta T_{ij}=-df_sT_{sij}, \qquad dT=0.
\end{equation}
Using the torsion  connection $\sb$ with 3-form torsion $T$, \eqref{tsym} and the second equation in\eqref{gein1}, we have 
\begin{equation}\label{gein2}\sb_i\sb_jf-\sb_j\sb_if=-df_sT_{sij}=\delta T_{ij}.
\end{equation}
In view of  \eqref{rics} and \eqref{gein2}  we write \eqref{gein1} in the form
\begin{equation}\label{gein3}
Ric_{ij}=-\frac12(\sb_i\sb_jf+\sb_j\sb_if)-\frac12\delta T_{ij}=-\sb_i\sb_j f, \quad Scal =-\sb_i\sb_if=\Delta f.
\end{equation}
The second Bianchi identity \eqref{rt} and \eqref{gein3}
yield
\begin{equation}\label{gein4}
\sb_j\Delta f-2\sb_iRic_{ji}+\delta T_{ab}T_{abj}+\frac16\sb_j||T||^2=\sb_j\Delta f+2\sb_i\sb_j\sb_i f+\delta T_{ab}T_{abj}+\frac16\sb_j||T||^2=0
\end{equation}
We evaluate the second term of \eqref{gein4} in two ways. First using the Ricci identities for $\sb$ and \eqref{gein2} 
\begin{multline}\label{gein5}
\sb_i\sb_j\sb_if=\sb_j\sb_i\sb_if-R_{ijis}\sb_sf-T_{ija}\sb_a\sb_i f=-\sb_j\Delta f+Ris_{js}\sb_sf-\frac12(\sb_a\sb_if-\sb_i\sb_af)T_{aij}\\=-\sb_j\Delta f-\sb_j\sb_sf.\sb_sf+\frac12df_sT_{sai}T_{aij}=-\sb_j\Delta f-\sb_j\sb_sf.\sb_sf-\frac12\delta T_{ai}T_{aij}
\end{multline}
Applying \eqref{gein2} and \eqref{ein10}, we obtain
\begin{equation}\label{gein6}
\sb_i\sb_j\sb_if=\sb_i(\sb_i\sb_jf-\delta T_{ij})=\sb_i\sb_i\sb_jf-\sb_i\delta T_{ij}=\sb_i\sb_i\sb_jf-\frac12\delta T_{ia}T_{iaj}
\end{equation}
Substitute \eqref{gein5} and \eqref{gein6} into \eqref{gein4}, we get
\begin{equation}\label{gein7}
\begin{split}
-\sb_j\Delta f-\sb_j||df||^2+\frac16\sb_j||T||^2=-\sb_j\Delta f+2Ric_{js}\sb_sf+\frac16\sb_j||T||^2=0;\\
\sb_j\Delta f+2\sb_i\sb_i\sb_j f+\frac16\sb_j||T||^2=\sb_j\Delta f-2\sb_iRic_{ij}+\frac16\sb_j||T||^2=0.
\end{split}
\end{equation}
Note that the first equality in \eqref{gein7} is precisely  \cite[Proposition~4.33]{GFS}.

Differentiate the first line in \eqref{gein7} apply \eqref{gein3} and the second equality in \eqref{gein7} to get
\begin{multline}\label{gein8}
0=\Delta(\Delta f-\frac16||T||^2)+2\sb_jRic_{js}\sb_sf+2Ric_{js}\sb_j\sb_sf=\Delta(\Delta f-\frac16||T||^2)+2\sb_jRic_{js}\sb_sf-2||Ric||^2\\
=\Delta(\Delta f-\frac16||T||^2)+\sb_j(\Delta f+\frac16||T||^2)\sb_jf-2||Ric||^2\\\le\Delta(\Delta f-\frac16||T||^2)+g(\sb(\Delta f +\frac16||T||^2),\sb f)
\end{multline}
If the norm of the torsion $T$ is constant, $\sb||T||^2=0$, then  \eqref{gein8} takes the form
\[\Delta\Delta f+g(\sb \Delta f,\sb f)\ge 0.\]
If $M$ is compact, $\Delta f$ is constant due to the strong maximum principle (see e.g. \cite{YB,GFS}) which yields $f= const$. Conversely, if the function $f$ is constant then \eqref{gein8}  together with the strong maximum principle implies $d||T||^2=0$ which yields the equivalence of a) and b). 

Assume a) or b). Then $Ric=Scal=\delta T=0$ due to \eqref{gein2} and \eqref{gein3}. If $Ric=0$ then $\Delta f=0$ leading to $f=const$ since $M$ is compact. Hence, b) is equivalent to c).

To show d) is equivalent to a) we use \eqref{rics} and \eqref{gein3} to find $Scal^g=\Delta f+\frac14||T||^2$ and we can write \eqref{gein8} in the form
\begin{equation}\label{ffinn}0\le\Delta(Scal^g-\frac5{12}||T||^2)+g(\sb(Scal^g-\frac1{12}||T||^2),\sb f).
\end{equation}
Then $Scal^g=const.$ if and only $d||T||^2=0$ by the strong maximum principle applied to \eqref{ffinn}.

The proof of Theorem~\ref{mainR} is completed.
\begin{rmrk}
Theorem~\ref{mainR} can be proved using  the formula in \cite[Proposition~4.33]{GFS} (we recovered it in \eqref{gein7}) applying the soliton conditions and the strong maximum principle following more or less the steps in our proof. To the best of our knowledge, the formulation and a complete proof of Theorem~\ref{mainR} appears for the first time in this paper.
\end{rmrk}


\end{document}